\newcommand{\ol}{\overline}
\newcommand{\pa}{\partial}
\DeclareMathOperator\tr{tr}
\DeclareMathOperator\id{id}
\DeclareMathOperator\rank{rank}
\DeclareMathOperator\Ric{Ric}
\DeclareMathOperator\dvol{dvol}
\DeclareMathOperator\End{End}
\DeclareMathOperator\im{im}
\DeclareMathOperator\Aut{Aut}
\DeclareMathOperator\Unitary{U}
\DeclareMathOperator\Ad{Ad}
\DeclareMathOperator\Hom{Hom}
\DeclareMathOperator\Div{div}
\begin{document}
\newcounter{remark}
\newcounter{theor}
\setcounter{remark}{0}
\setcounter{theor}{1}
\newtheorem{claim}{Claim}[section]
\newtheorem{theorem}{Theorem}[section]
\newtheorem{lemma}{Lemma}[section]
\newtheorem{corollary}{Corollary}[section]
\newtheorem{corollarys}{Corollary}
\newtheorem{proposition}{Proposition}[section]
\newtheorem{question}{Question}[section]
\newtheorem{defn}{Definition}[section]
\newtheorem{examp}{Example}[section]
\newtheorem{assumption}{Assumption}[section]
\newtheorem{rem}{Remark}[section]
\newtheorem*{theorem1}{Theorem}
\newtheorem*{maintheorem}{Main Theorem}
\newtheorem*{maintheorem2}{Main Theorem \uppercase\expandafter{\romannumeral2}}
\numberwithin{equation}{section}
\title{Harmonic metrics and semi-simpleness}
\author{Di Wu}
\address{Di Wu, School of Mathematics and Statistics, Nanjing University of Science and Technology, Nanjing 210094, People's Republic of China}
\email{wudi123@mail.ustc.edu.cn}
\author{Xi Zhang}
\address{Xi Zhang, School of Mathematics and Statistics, Nanjing University of Science and Technology, Nanjing 210094, People's Republic of China}
\email{mathzx@ustc.edu.cn}
\subjclass[]{53C07, 53C25, 32L05, 14J60.}
\keywords{Harmonic metric, Semi-simpleness, Sasakian manifold.}
\thanks{The research was supported by the National Key R\&D Program of China 2020YFA0713100. Both authors are partially supported by NSF in China No.12141104. The first author is also supported by the Jiangsu Funding Program for Excellent Postdoctoral Talent 2022ZB282}

\maketitle
\begin{abstract}
Given a flat vector bundle over a compact Riemannian manifold, Corlette \cite{Co1988} and Donaldson \cite{Do1987b} proved that it admits harmonic metrics if and only if it is semi-simple. In this paper, we extend this equivalence to arbitrary vector bundles without any additional hypothesis, the result can be viewed as a Riemannian Hitchin-Kobayashi correspondence. Furthermore, we also prove an equivalence of categories in Sasakian geometry, relating the category of projective flat complex vector bundles to the category of Higgs bundles.
\end{abstract}
\section{Introduction}
A unifying principle in geometric analysis predicts that existences of canonical metrics should be closely related to appropriate algebraic stability conditions on underlying geometric objects, one may refer to the works on Yau-Tian-Donaldson conjecture for extremal metrics in K\"{a}hler geometry and Donaldson-Uhlenbeck-Yau characterization(also referred as Hitchin-Kobayashi correspondence or Kobayashi-Hitchin correspondence) for Hermitian-Einstein metrics in holomorphic vector bundles. It is our aim in present paper to investigate the issue on general vector bundles. Unless indicated explicitly otherwise, vector bundles could be real or complex, whose metrics are Riemannian or Hermitian respectively.
\par Suppose that $E$ is a smooth vector bundle over a compact Riemannian manifold $(M,g)$ and $\nabla$ is a connection on $E$. We call $(E,\nabla)$ semi-simple(also called completely reducible or reductive in the literature) if it splits as a direct sum of simple sub-bundles, where simple(also called irreducible or stable in the literature) means that there exists no nontrivial $\nabla$-invariant sub-bundle. A vector bundle $E$ is said to be semi-simple if $(E,\nabla)$ is semi-simple for some $\nabla$. Naturally there arises a fundamental problem: \textit{Does there exist certain best canonical metrics on $(E,\nabla)$?} On the other hand, motivated by calculating the Euler characteristic number $\chi(E)$ via the Gauss-Bonnet-Chern formula and its ramifications, one may ask: \textit{How to find metric compatible connections on $E$?} For this purpose, we write for a metric $K$ on $E$,
\begin{equation}\begin{split}
\nabla=\nabla_K+\psi_{K},
\end{split}\end{equation}
where $\nabla_K$ is a connection preserving $K$ and $\psi_{K}$ is a $\End(E)$-valued $1$-form. We define
\begin{equation}\begin{split}
\mathcal{E}_\nabla(K)=\frac{1}{2}\int_M|\psi_{K}|^2\dvol_g,
\end{split}\end{equation}
and the point is to minimize $\mathcal{E}_\nabla(K)$ when $K$ varies.
\par The canonical metric concerned in this paper is the critical point of $\mathcal{E}_\nabla$ which means
\begin{equation}\begin{split}\label{harmonicmetricequation}
\nabla_H^\ast\psi_{H}=0,
\end{split}\end{equation}
and we then call $H$ a harmonic metric. Another reason for this candidate lies in the scenario of nonabelian Hodge theory in K\"{a}hler geometry \cite{Si1992}, in which $\nabla$ is assumed to be flat and the existence of harmonic metrics leads to the existence of Higgs structures. It is remarked that the flatness means any metric $H$ corresponds to an equivariant map
\begin{equation}\begin{split}
f_H: \tilde{M}\rightarrow GL(r)/U(r),
\end{split}\end{equation}
where $\tilde{M}$ is the universal covering space of $M$ and $r=\rank(E)$, Moreover, it is known that $H$ being harmonic iff $f_H$ being harmonic. So in general case, lacking of flatness may block off methods used in equivariant harmonic maps. Detecting harmonic metrics on vector bundles is a nonlinear system generalization of solving Laplace equations and obstructions would appear.
\par In this paper, we prove(see Theorem \ref{semisimpleharmonic})
\begin{theorem}\label{thm1}
Let $(E,\nabla)$ be a vector bundle over a compact Riemannian manifold with an arbitrary connection $\nabla$, then it admits harmonic metrics if and only if it is semi-simple.
\end{theorem}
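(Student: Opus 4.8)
The space of metrics on $E$ is, fibrewise, the nonpositively curved symmetric space $GL(r)/U(r)$ (or its orthogonal analogue), and along a geodesic $H_t$ with $H_0=H$ one checks that $t\mapsto\mathcal{E}_\nabla(H_t)$ is convex. Since the Euler--Lagrange equation of $\mathcal{E}_\nabla$ is exactly \eqref{harmonicmetricequation}, a metric is harmonic iff it is a critical point, and by convexity this is the same as being a global minimizer; the theorem thus amounts to asserting that $\inf\mathcal{E}_\nabla$ is attained iff $(E,\nabla)$ is semi-simple. I treat the two implications separately. Throughout, for a metric $H$ and a $\nabla$-invariant subbundle $F$ I write $\pi$ for the $H$-orthogonal projection onto $F$, decompose $\nabla=\nabla_H+\psi_H$, and use that both $\nabla_H\pi$ and $[\psi_H,\pi]$ are purely off-diagonal with respect to $E=F\oplus F^\perp$.

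\textbf{Necessity (harmonic $\Rightarrow$ semi-simple).} It suffices to show that the $H$-orthogonal complement $F^\perp$ of any $\nabla$-invariant $F$ is again $\nabla$-invariant; since the induced metrics on the invariant summands remain harmonic, induction on $\rank(E)$ then splits $(E,\nabla)$ into simple pieces. Invariance of $F$ says the second fundamental form $(\id-\pi)(\nabla\pi)\pi$ vanishes, which rewrites as $(\id-\pi)(\nabla_H\pi)\pi=-(\id-\pi)\psi_H\pi$. The plan is to pair \eqref{harmonicmetricequation} against $\pi$: integrating by parts, $0=\int_M\langle\nabla_H^\ast\psi_H,\pi\rangle\dvol_g=\int_M\langle\psi_H,\nabla_H\pi\rangle\dvol_g$. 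Only the off-diagonal blocks survive, so after substituting the previous identity the right-hand integrand equals $-2\,|(\id-\pi)\psi_H\pi|^2$, forcing $(\id-\pi)\psi_H\pi=0$, i.e.\ $\psi_H$ preserves $F$. Feeding this back shows that $\nabla_H\pi$ is block-diagonal and $[\psi_H,\pi]=0$, so the second fundamental form $\pi(\nabla\pi)(\id-\pi)$ of $F^\perp$ also vanishes and $F^\perp$ is $\nabla$-invariant. This direction uses only \eqref{harmonicmetricequation} and a single integration by parts, with no hypothesis on the curvature of $\nabla$.

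\textbf{Sufficiency (semi-simple $\Rightarrow$ harmonic).} As an orthogonal direct sum of harmonic metrics is again harmonic, I may assume $(E,\nabla)$ is simple. The plan is to realize the minimizer of $\mathcal{E}_\nabla$ by the negative gradient flow $\pa_t H=-2H\,(\nabla_H^\ast\psi_H)$ (equivalently $H^{-1}\pa_tH=-2\nabla_H^\ast\psi_H$). Short-time existence is parabolic theory; because the target is nonpositively curved the energy density satisfies a Weitzenb\"ock inequality precluding bubbling, which yields long-time existence, and $\mathcal{E}_\nabla$ decreases along the flow. It then remains to rule out escape to infinity in the noncompact space of metrics, and this is exactly where simpleness enters: if, after normalizing the determinant, the metrics $H_i$ degenerated, then the normalized endomorphisms $s_i/\|s_i\|_\infty$ with $s_i=\log(H_0^{-1}H_i)$ would converge weakly to a nonzero self-adjoint $u_\infty$ whose spectral projections define a weakly $\nabla$-invariant and hence, by an Uhlenbeck--Yau-type regularity theorem, a genuine proper $\nabla$-invariant subbundle, contradicting simpleness. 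Therefore the flow stays in a bounded region, converges, and elliptic regularity upgrades the limit to a smooth harmonic metric.

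\textbf{Main obstacle.} The analytic core is the final step of sufficiency: extracting from a degenerating minimizing sequence a genuine invariant subbundle. This divides into (i) an a priori estimate showing that a uniform energy bound forces the weak limit $u_\infty$ to be $\nabla$-parallel, and (ii) the regularity statement that a weakly invariant subbundle is smooth. Because $\nabla$ is not assumed flat, the equivariant harmonic-map picture recalled in the introduction is unavailable, and the curvature of $\nabla$ contributes extra zeroth-order terms to every Bochner/Weitzenb\"ock identity; controlling these terms uniformly along the flow, and adapting the Uhlenbeck--Yau regularity argument to the present Riemannian (rather than holomorphic) setting, is where the genuine difficulty lies.
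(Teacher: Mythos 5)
Your necessity direction (harmonic $\Rightarrow$ semi-simple) is correct and is essentially the paper's own argument in different clothes: pairing $\nabla_H^\ast\psi_H=0$ against the orthogonal projection $\pi$ and integrating by parts is the same computation that the paper performs via the Gauss--Codazzi block decomposition, where harmonicity gives $2\nabla_{F,H}^\ast\psi_{F,H}=\sum_i\beta_F(\pa/\pa x^i)\circ\beta_F^\ast(\pa/\pa x^i)$ and a trace-plus-integration kills $\beta_F$; your identity $(\id-\pi)(\nabla_H\pi)\pi=-(\id-\pi)\psi_H\pi$ is exactly the second fundamental form written through $\pi$, and the induction on rank is the same.

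The genuine gap is in sufficiency. You run Corlette's heat flow, but every mechanism you cite to make it work --- convexity from the nonpositively curved target, the ``Weitzenb\"ock inequality precluding bubbling,'' long-time existence and convergence --- belongs to the equivariant-harmonic-map picture, which exists only when $\nabla$ is flat; the paper flags exactly this obstruction in the introduction. For general $\nabla$, the Bochner identity for $|\psi_H|^2$ (see the proof of Lemma \ref{c1estimate}) contains the terms $(F_\nabla-F_\nabla^{\ast H})$ and $D_H^\ast(F_\nabla+F_\nabla^{\ast H})$, which depend on the unknown metric and its first derivatives, so no uniform parabolic subsolution estimate is available along your flow: long-time existence is not a citation but the heart of the problem. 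Equally seriously, the Simpson-type inequality that powers your degeneration step --- $\int_M(\rho[u_\infty](\nabla u_\infty),\nabla u_\infty)\,\dvol_g\le 2\int_M(\nabla_K^\ast\psi_K,u_\infty)\,\dvol_g$, which forces $u_\infty$ to have constant eigenvalues and parallel spectral projections --- is extracted in the paper from the zeroth-order perturbation of the continuity path $\nabla_{H_\epsilon}^\ast\psi_{H_\epsilon}=\epsilon\log f_\epsilon$: the favorable sign $(\epsilon\log f_\epsilon,\log f_\epsilon)\ge0$ in $(\ref{keyinequality})$ simultaneously yields the closedness bound $|\log f_\epsilon|\le\epsilon^{-1}\sup_M|\nabla_K^\ast\psi_K|$ for $\epsilon>0$ and the uniform $L_1^2$ control of the normalized logarithms as $\epsilon\to0$ (Lemma \ref{c0contradiction}). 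Along your flow the corresponding term is $-\tfrac12(f_t^{-1}\pa_t f_t,\log f_t)$, which has no sign and which you do not control; this is precisely why the paper abandons the flow in favor of the perturbed continuity method, whose openness is secured by the maximum-principle argument using the L\"ubke--Teleman pointwise inequality. Finally, the ``Uhlenbeck--Yau-type regularity theorem'' you invoke for weakly invariant subbundles is not off-the-shelf in this real, non-holomorphic setting ($\nabla$ rather than $\ol\pa$); the paper proves smoothness of the projections directly from $\nabla\Pi_s=2\Pi_s\circ[\psi_K,\Pi_s]$ by bootstrapping. In short, your outline correctly locates the difficulty but then defers to flat-case technology that does not apply, so the sufficiency direction remains unproved.
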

\par Note that Theorem \ref{thm1} is a Riemannian counterpart of the Hitchin-Kobayashi correspondence in K\"{a}hler geometry \cite{Do1985,Do1987a,UY1986}. Concerning Theorem \ref{thm1}, there are a few previous works in the presence of extra assumptions which we should mention. As a postscript to Hitchin's paper \cite{Hi1987} on self-duality equations, borrowing ideas from harmonic maps, Donaldson \cite{Do1987b} proved that any vector bundle over a compact Riemannian surface equipped with a semi-simple flat connection must admit harmonic metrics. Deforming a semi-simple flat connection via an evolution equation and utilizing Uhlenbeck compactness \cite{Uh1982}, it is Corlette who proved Theorem \ref{thm1} for flat vector bundles in \cite{Co1988}. Many works are constantly engaged in generalizations and applications on Corlette-Donaldson's tremendous criterion since then, see \cite{CJY2019,Co1992,JZ1996,JZ1997,KV1998,Li1996,Lo2009,Lu1999,Mo2009,PZZ2019,Si1990,WZ2021b}. Among other things, if $(E,\nabla)$ is non-Hermitian Yang-Mills(see \cite{KV1998} for precise definition) instead of flatness and the base space is a compact K\"{a}hler manifold, Kaledin-Verbitsky conjectured(\cite[Conjecture 8.7]{KV1998}) that the existence of harmonic metrics is equivalent to a kind of stability condition, see \cite{PSZ2022}.
\par Given an automorphism $\sigma\in\Aut(E)$, it acts on a connection $\nabla$ by $\nabla\mapsto\sigma\circ \nabla\circ\sigma^{-1}$ and the space $\mathcal{M}_S$ of isomorphism classes of simple connections is realized as the quotient
\begin{equation}\begin{split}
\{Simple\ connections\ on\ E\}/\Aut(E).
\end{split}\end{equation}
We say a connection $\nabla$ irreducible if there is no nontrivial $\nabla$-invariant endomorphism and the space $\mathcal{M}_{I,K}$ of isomorphism classes of irreducible connections with $\nabla_K^\ast\psi_K=0$ is defined by
\begin{equation}\begin{split}
\{Irreducible\ connections\ \nabla\ on\ E\ with\ \nabla_K^\ast\psi_K=0\}/\Unitary(E,K),
\end{split}\end{equation}
where $\Unitary(E,K)\subseteq\Aut(E)$ is the sub-group preserving $K$.
\par Using Theorem \ref{thm1}, we have(see Corollary \ref{correspondence})
\begin{corollary}\label{cor1}
Assume $(E,K)$ is a Hermitian vector bundle over a compact Riemannian manifold, then there is a bijection from $\mathcal{M}_S$ to $\mathcal{M}_{I,K}$.
\end{corollary}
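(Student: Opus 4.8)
The plan is to construct an explicit map $\Phi\colon\mathcal{M}_S\to\mathcal{M}_{I,K}$ by using a harmonic metric to rigidify the gauge freedom from $\Aut(E)$ down to $\Unitary(E,K)$, and then verify it is a bijection. A preliminary observation is that, since $E$ is complex, ``simple'' and ``irreducible'' coincide by a Schur-type argument: if $\sigma$ is a $\nabla$-parallel endomorphism then each eigenbundle $\ker(\sigma-\lambda)$ is $\nabla$-invariant of constant rank, so simpleness forces $\sigma$ to be a scalar; conversely, using the structure behind Theorem \ref{thm1} (the $K$-orthogonal complement of a parallel subbundle of a connection with harmonic metric $K$ is again parallel), a nontrivial invariant subbundle would produce a nonscalar parallel orthogonal projection, contradicting irreducibility. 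In particular a simple connection is semi-simple, hence by Theorem \ref{thm1} it carries a harmonic metric.

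For the construction, given $[\nabla]\in\mathcal{M}_S$ with representative $\nabla$, I would choose a harmonic metric $H$ and write $H=g^\ast K$, where $g$ is the unique $K$-self-adjoint positive automorphism with $H(\cdot,\cdot)=K(g\,\cdot,g\,\cdot)$, and set $\Phi([\nabla])=[g\nabla g^{-1}]$. A direct computation shows that if $\nabla=\nabla_H+\psi_H$ is the splitting into $H$-unitary and $H$-self-adjoint parts, then $g\nabla g^{-1}=g\nabla_H g^{-1}+g\psi_H g^{-1}$ is exactly the corresponding $K$-splitting; moreover conjugation by $g$ is an isometry $(\End(E),H)\to(\End(E),K)$ intertwining $\nabla_H$ with $(g\nabla g^{-1})_K$, so that $(g\nabla g^{-1})_K^\ast(g\psi_H g^{-1})=g(\nabla_H^\ast\psi_H)g^{-1}=0$. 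Thus $K$ is harmonic for $g\nabla g^{-1}$, which is still simple and hence irreducible, so $\Phi$ indeed lands in $\mathcal{M}_{I,K}$.

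Well-definedness is where uniqueness of the harmonic metric must be invoked: for a simple connection the harmonic metric is unique up to a positive scalar, since the ratio of two harmonic metrics is a $\nabla$-parallel positive self-adjoint endomorphism (by a Bochner/integration-by-parts argument) and hence a scalar by Schur. Replacing $H$ by $cH$ replaces $g$ by $\sqrt{c}\,g$ and leaves $g\nabla g^{-1}$ unchanged, while for a change of representative $\nabla\mapsto\phi\nabla\phi^{-1}$ one transports $H$ to $(\phi^{-1})^\ast H$ and compares the two polar decompositions; the resulting connections differ by the $K$-unitary factor arising from $g\phi^{-1}\tilde g^{-1}$, so the class in $\mathcal{M}_{I,K}$ is unchanged. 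Injectivity is then immediate, since $g_1\nabla_1 g_1^{-1}=u\,g_2\nabla_2 g_2^{-1}u^{-1}$ with $u\in\Unitary(E,K)$ forces $\nabla_1$ and $\nabla_2$ to be conjugate by $g_1^{-1}ug_2\in\Aut(E)$. Surjectivity is clear: if $\nabla$ is irreducible with $\nabla_K^\ast\psi_K=0$ then $K$ is already harmonic, so one takes $H=K$, $g=\id$, and $\Phi([\nabla])=[\nabla]$.

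I expect the main obstacle to be precisely the uniqueness of the harmonic metric up to scalar for simple connections, as this is what makes $\Phi$ well defined and collapses $\Aut(E)$ to $\Unitary(E,K)$; together with the Schur-type identification of simpleness with irreducibility in the presence of a harmonic metric, this is the substantive content of the statement, whereas the polar-decomposition and adjoint-intertwining computations are routine bookkeeping.
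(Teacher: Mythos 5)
Your proposal is correct and follows essentially the same route as the paper: the Schur-type argument that a $\nabla$-parallel endomorphism of a simple bundle is scalar, existence of a harmonic metric $H$ via Theorem \ref{thm1}, gauging $\nabla$ by a square root of $K^{-1}H$ so that $K$ becomes harmonic, and uniqueness of the harmonic metric up to a positive scalar via the Bochner identity $\Delta_g\tr h_{12}=|h_{12}^{-1/2}\delta_{H_1}h_{12}|_{H_1}^2$, with the residual ambiguity absorbed into $\Unitary(E,K)$. The only (welcome) difference is bookkeeping: you fix the positive polar square root and spell out surjectivity --- that irreducibility plus harmonicity of $K$ forces simpleness via the vanishing of the second fundamental form --- a point the paper leaves implicit.
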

\par Next we are concerned with vector bundles which have Sasakian manifolds as base spaces. Recall Sasakian manifolds were first introduced in \cite{Sa1960}, which is a subject lying on the intersection of contact, CR, Riemannian and K\"{a}hler geometry. A standard model of a compact Sasakian manifold is the odd dimensional unit sphere $S^{2n+1}$. With the robust discovery of its relevance in string theory and anti-de Sitter space duality conjecture/conformal field theory in mathematical physics \cite{Ma1998}, there has been a considerable increase of interests in Sasakian manifolds of late, see \cite{BG2008,GMSW2004,MS2006,MSY2008} and references therein.
\par An application of Theorem \ref{thm1} emerges in Sasakian geometry, we prove(see Theorem \ref{Sasakiancorrespondence})
\begin{theorem}\label{thm3}
For a compact Sasakian manifold $(M,T_{1,0}M,\eta)$ of dimension $2n+1$, there exists an equivalence between the category of rank $r$ semi-simple basic projective flat complex vector bundles over $M$ and the category of rank $r$ poly-stable basic Higgs bundles over $M$ with
\begin{equation}\begin{split}
\int_M\left(c_{2,B_{\mathcal{F}_\xi}}(E)-\frac{r-1}{2r}c_{1,B_{\mathcal{F}_\xi}}^2(E)\right)\wedge(d\eta)^{n-2}\wedge\eta=0,
\end{split}\end{equation}
where $\xi$ is the characteristic direction and $\mathcal{F}_\xi$ is the associated foliation.
\end{theorem}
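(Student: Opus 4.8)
The plan is to set up a transverse (basic) version of the Corlette--Simpson nonabelian Hodge correspondence, using Theorem \ref{thm1} as the analytic engine on one side and the transverse Hitchin--Kobayashi correspondence on the other. Throughout one works with the transverse K\"{a}hler structure carried by the Reeb foliation $\mathcal{F}_\xi$: basic forms decompose into transverse $(p,q)$-types, and the basic Chern classes $c_{i,B_{\mathcal{F}_\xi}}(E)$ are computed by basic Chern--Weil theory from any basic connection. The Bogomolov-type integral in the statement is precisely the quantity that, by the transverse analogue of Kobayashi's characterization, detects when a poly-stable bundle is projectively flat; its vanishing is the transverse equality case of the Bogomolov inequality.

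First, starting from a rank $r$ semi-simple basic projective flat bundle $(E,\nabla)$, I would produce a harmonic metric $H$ via Theorem \ref{thm1}. The crucial point is to arrange that $H$ be \emph{basic}, i.e. invariant under the Reeb flow. Since $(E,\nabla)$ is semi-simple, harmonic metrics are unique up to the scalings permitted by Schur's lemma on the simple factors; the Reeb flow acts by symmetries of $\nabla$ and of the energy $\mathcal{E}_\nabla$, so it preserves the splitting into simple sub-bundles and one may normalize $H$ to be Reeb-invariant. In the irregular case this invariance argument, rather than a naive averaging over leaf closures, is what guarantees basicness.

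Next, with a basic harmonic metric in hand, I would extract the Higgs structure transversally. Decompose $\nabla=\nabla_H+\psi_H$ with $\nabla_H$ unitary and $\psi_H$ self-adjoint, and split along transverse types: let $\bar\partial_E$ be the transverse $(0,1)$ part of $\nabla_H$ and $\theta$ the transverse $(1,0)$ part of $\psi_H$. The harmonic metric equation $\nabla_H^\ast\psi_H=0$, combined with the projective flatness of $\nabla$ and the transverse K\"{a}hler identities, should yield $\bar\partial_E^2=0$, $\bar\partial_E\theta=0$ and $\theta\wedge\theta=0$ through a transverse Weitzenb\"{o}ck/Bochner computation mimicking Simpson's, making $(E,\bar\partial_E,\theta)$ a basic Higgs bundle. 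Poly-stability then follows from semi-simpleness of $(E,\nabla)$ together with the induced Hermitian--Einstein property of $H$, and the projective flatness translates, via basic Chern--Weil, into the vanishing of the Bogomolov integral in the statement.

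For the reverse direction I would invoke the transverse Hitchin--Kobayashi correspondence: a poly-stable basic Higgs bundle satisfying the Chern class condition carries a transverse Higgs--Hermitian--Einstein metric, and the equality case of the Bogomolov inequality forces the associated Simpson-type connection $D=\bar\partial_E+\partial_H+\theta+\theta^{\ast_H}$ to be projectively flat, with semi-simpleness coming from poly-stability. Finally, I would check that both constructions are functorial and mutually inverse, so that they assemble into the claimed equivalence of categories. The principal obstacle I anticipate is the first step: guaranteeing that the harmonic metric from Theorem \ref{thm1} can be chosen basic when $\mathcal{F}_\xi$ is irregular, since then the leaves need not be compact and one cannot simply average, so the symmetry and near-uniqueness of harmonic metrics on semi-simple bundles must be exploited instead. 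A secondary difficulty is the precise equality-case analysis of the transverse Bogomolov inequality that pins down projective flatness.
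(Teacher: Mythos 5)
Your overall architecture matches the paper's: use Theorem \ref{thm1} to produce a harmonic metric, extract a basic Higgs structure by transverse type decomposition and a Simpson-type integration-by-parts (the paper's $G_{H,\xi}=(D''_{H,\xi})^2$ computation, where primitivity $\Lambda_\xi G_{H,\xi}=0$ from harmonicity forces $G_{H,\xi}=0$), get the Chern class identity and poly-stability from projective flatness and \cite[Theorem 4.7]{BH2022}, run the converse through the Sasakian/foliated Hitchin--Kobayashi correspondence of \cite{BK2021a,BH2022} with the equality case of the transverse Bogomolov inequality, and assemble the equivalence via $H^0_{DR,\mathcal{F}_\xi}\cong H^0_{Dol,\mathcal{F}_\xi}$. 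But there is a genuine gap at exactly the step you flag as the principal obstacle: making the harmonic metric compatible with the Reeb direction. Your plan is to ``normalize $H$ to be Reeb-invariant'' using near-uniqueness of harmonic metrics. Note first that Reeb-invariance of $H$ (under the lift of the flow by $\nabla_\xi$-parallel transport, which does preserve $\nabla$ since $\iota_\xi F_\nabla=0$ for a basic connection) is precisely the condition $\psi_H(\xi)=0$. Now run your symmetry argument carefully: since the lifted flow acts by isometries preserving $\nabla$, $\Phi_t^\ast H$ is a one-parameter family of harmonic metrics, and uniqueness modulo $\nabla$-parallel automorphisms yields at best $\Phi_t^\ast H=H\exp(tv)$ with $v$ parallel and, differentiating at $t=0$, $v=-2\psi_H(\xi)$. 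This is a translation-type orbit in the space of metrics: it has \emph{no} invariant point unless $v=0$. So ``normalizing $H$ to be invariant'' presupposes exactly the vanishing $\psi_H(\xi)=0$ you need to prove; Schur's lemma and preservation of the simple splitting reduce the drift to a parallel endomorphism but cannot kill it, and no argument in your proposal does.

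The paper closes this gap analytically rather than by symmetry, in Lemma \ref{basicharmonic1} and Proposition \ref{basicharmonic2}. First, a Bochner computation with the Tanaka--Webster connection (using $\nabla^{TM}\xi=0$ and the vanishing of the pseudo-Hermitian torsion on a Sasakian manifold) gives
\begin{equation*}
\Delta_{g_\eta}|\psi_H(\xi)|^2_H=|[\psi_H(\xi),\psi_H]|^2_H+2|\nabla_H(\psi_H(\xi))|^2_H\geq0,
\end{equation*}
so the maximum principle yields $\nabla_H(\psi_H(\xi))=0$; notably this needs only $\nabla_{H,\xi}\nabla_H^\ast\psi_H=0$ and projective flatness, not basicness of $\nabla$. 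Second, one still must upgrade parallelism of $\psi_H(\xi)$ to vanishing: the paper does this via the Dirac operator $\mathcal{D}$ on $\Lambda^\ast T^\ast M\otimes\End(E)$, computing $\mathcal{D}\psi_H=d\eta\otimes\psi_H(\xi)+\nabla_H^\ast\psi_H$ and $\mathcal{D}^2\psi_H=0$, whence $\mathcal{D}\psi_H=0$ by formal self-adjointness, and therefore $\psi_H(\xi)=0$ since $d\eta\neq0$. This second step has no counterpart in your proposal, and your route stalls precisely where it would be needed (your own rank-considerations under Schur's lemma would leave a nonzero parallel $\psi_H(\xi)$ unexcluded). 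If you want to salvage the symmetry approach, you would have to supply an independent argument that the parallel drift vanishes --- which is the actual mathematical content of the paper's key vanishing result. The remainder of your outline (transverse K\"{a}hler identities, equality case of the Bogomolov integral, functoriality) is consistent with the paper's proof.
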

The existence of harmonic metrics is employed to produce the functor that from semi-simple basic projective flat complex vector bundles to basic Higgs bundles. Beyond that, it requires to prove the key property $\nabla_{H}(\psi_H(\xi))=0$ for harmonic metric $H$, known by Biswas-Kasuya \cite{BK2021a} for flat vector bundles via modifying the spinorial proof of \cite[Theorem 4.1]{Pe2002}. Under the weaker condition $\nabla_{H,\xi}\nabla_H^\ast\psi_H=0$, we shall give a simple maximum principle proof without involving special curvature theory of Tanaka-Webster connections on Sasakian manifolds and the commutative formula \cite[Corollary 2.1]{Pe2002} concerning the Dirac operator, see Lemma \ref{basicharmonic1}. The opposite direction is mainly inspired by \cite{Si1988} and Baraglia-Hekmati's work \cite{BH2022} on a foliated Hitchin-Kobayashi correspondence, where the notion of stability was addressed.
\par Our Theorem \ref{thm3} extends the main result in \cite{BK2021a}(see also \cite{BM2018} for the case of quasi-regular Sasakian manifolds) to the nonvanishing Chern classes case. Note these are Sasakian analogues of the Corlette-Donaldson-Hitchin-Simpson correspondence built upon \cite{Co1988,Do1987b,Hi1987,Si1988,Si1992}, dating back to Narasimhan-Seshadri \cite{NS1965}, Donadlson \cite{Do1985,Do1987a} and Uhlenbeck-Yau \cite{UY1986}. Readers are also refereed to \cite{BK2021b,Ka2020,KM2022} for recent works concerning related topics.
\section{Harmonic metrics on vector bundles}
\subsection{Preliminaries}
We assume $E$ is a vector bundle over a compact Riemannian manifold $(M,g)$ and set $A^{p}(M,E)=\Gamma(\Lambda^pT^\ast M\otimes E)$, the space of $p$-forms on $M$ with values in $E$. Associated to a connection $\nabla$ on $E$, the exterior differential operator
\begin{equation}\begin{split}
D: A^{p}(M,E)\rightarrow A^{p+1}(M,E)
\end{split}\end{equation}
is defined by requiring $D(\alpha\otimes u)=d\alpha\otimes u+(-1)^p\alpha\wedge\nabla u$ for any $\alpha\in A^p(M)$ and $u\in\Gamma(E)$. Then the curvature is given by $F_\nabla=D\circ\nabla\in A^2(M,\End(E))$. For $\omega\in A^p(M,E)$, we have
\begin{equation}\begin{split}\label{exteriord}
D\omega(e_0,...,e_p)
&=\sum\limits_{k=0}^p(-1)^{k}\nabla_{e_k}(\omega(e_0,e_1,...,\hat{e_k},...,e_p))
\\&+\sum\limits_{0\leq k<l\leq p}(-1)^{k+l}\omega([e_k,e_l],e_1,...,\hat{e_k},...,\hat{e_l},...,e_p),
\end{split}\end{equation}
where $e_1,...,e_p\in\Gamma(TM)$ and symbols covered by $\wedge$ are omitted. If the tangent bundle $TM$ is equipped with a connection $\nabla^{TM}$, the action of $\nabla$ may be further extended to tensorial combinations of $TM$ and $E$ as well as their duals. Then it also holds
\begin{equation}\begin{split}
D\omega(e_0,...,e_p)
&=\sum\limits_{k=0}^p(-1)^{k}(\nabla_{e_k}\omega)(e_0,...,\hat{e_k},...,e_p)
\\&-\sum\limits_{0\leq k<l\leq p}(-1)^{k+l}\omega(T_{\nabla^{TM}}(e_k,e_l),e_1,...,\hat{e_k},...,\hat{e_l},...,e_p),
\end{split}\end{equation}
where $T_{\nabla^{TM}}$ denotes the torsion. For a metric $K$ on $E$, we have the pointwise inner-product
\begin{equation}\begin{split}
(\bullet,\bullet)_K: A^p(M,E)\times A^p(M,E)\rightarrow C^\infty(M),
\end{split}\end{equation}
\begin{equation}\begin{split}
(\omega,\theta)\mapsto\sum\limits_{i_1<...<i_p}K(\omega(e_{i_1},...,e_{i_p}),\theta(e_{i_1},...,e_{i_p})),
\end{split}\end{equation}
where $\{e_i\}_{i=1}^{\dim M}$ is an orthogonal unit basis of $TM$. Henceforth, we will omit the subscript $K$ if there is no ambiguity. The generalized inner-product
\begin{equation}\begin{split}
K(\bullet,\bullet): A^p(M,E)\times A^q(M,E)\rightarrow A^{p+q}(M)
\end{split}\end{equation}
is defined by requiring $K(\alpha\otimes u,\beta\otimes v)=(u,v)\alpha\wedge\beta$ for any $\alpha\in A^p(M)$, $\beta\in A^q(M)$ and $u,v\in\Gamma(E)$. In particular, for $\omega,\theta\in A^p(M,E)$ we have $K(\omega,\ast\theta)=(\omega,\theta)\dvol_g$, where $\ast$ is the Hodge star operator acting on the form component.
\par Given above $\nabla$ and $K$, we define $\psi_K\in A^1(M,\End(E))$ by
\begin{equation}\begin{split}\label{metricdecom}
K(\psi_{K}(u),v)=\frac{1}{2}\left(K(\nabla u,v)+K(u,\nabla v)-dK(u,v)\right),
\end{split}\end{equation}
where $u,v\in\Gamma(E)$. One can easily check $\psi_K$ is self-adjoint and
\begin{equation}\begin{split}
\nabla_K=\nabla-\psi_K: A^{0}(M,E)\rightarrow A^{1}(M,E)
\end{split}\end{equation}
is a connection preserving $K$. The co-differential operator
\begin{equation}\begin{split}
D_K^\ast: A^{p}(M,E)\rightarrow A^{p-1}(M,E)
\end{split}\end{equation}
is determined by $D_K$, the exterior differential operator of $\nabla_K$,
\begin{equation}\begin{split}
\int_M(D_K\omega,\theta)\dvol_g=\int_M(\omega,D_K^\ast\theta)\dvol_g,
\end{split}\end{equation}
where $\omega\in A^{p-1}(M,E),\theta\in A^p(M,E)$. Then it holds
\begin{equation}\begin{split}
D_K^\ast\theta(e_1,..,e_{p-1})=-\tr_g\nabla_{K,\bullet}\theta(\bullet,e_1,...,e_{p-1}),
\end{split}\end{equation}
where for notational simplicity, $\nabla_K$ also denotes the connection on $\Lambda^pT^\ast M\otimes E$ which acts on $\Lambda^pT^\ast M$ by the Levi-Civita connection $\nabla^g$.
\par For two metrics $K$ and $H$ on $E$, we denote by $f=K^{-1}H$ the endormorphism given by
\begin{equation}\begin{split}
H(\bullet,\bullet)=K(f(\bullet),\bullet).
\end{split}\end{equation}
By $(\ref{metricdecom})$ and the fact that $f$ is self-adjoint with respect to $K$ and $H$, we have
\begin{equation}\begin{split}
H(\psi_{H}(u),v)
&=\frac{1}{2}\left(H(\nabla u,v)+H(u,\nabla v)-dH(u,v)\right)
\\&=\frac{1}{2}\left(K((f\circ \nabla )u,v)+K(f(u),\nabla v)-dK(f(u),v)\right)
\\&=\frac{1}{2}\left(K((\nabla\circ f-\nabla f)u,v)+K(f(u),\nabla v)-dK(f(u),v)\right)
\\&=K((\psi_{K}\circ f)u,v)-\frac{1}{2}K((\nabla f)u,v)
\\&=H((f^{-1}\circ\psi_{K}\circ f)u,v)-\frac{1}{2}H((f^{-1}\nabla f)u,v)
\\&=H(\psi_{K}(u),v)+H((f^{-1}[\psi_K,f])u,v)
-\frac{1}{2}H((f^{-1}\nabla f)u,v)
\\&=H(\psi_{K}(u),v)-\frac{1}{2}H((f^{-1}\delta_{K}f)u,v),
\end{split}\end{equation}
where $\delta_{K}=\nabla_K-\psi_K$. It follows
\begin{equation}\begin{split}
\nabla_H=\nabla_K+\frac{1}{2}f^{-1}\delta_{K}f,\ \psi_{H}=\psi_{K}-\frac{1}{2}f^{-1}\delta_{K}f.
\end{split}\end{equation}
According to the definition, we deduce
\begin{equation}\begin{split}\label{DHKstarpsiHK}
\nabla_H^\ast\psi_{H}&=-\tr_g\nabla_{H}\psi_{H}
\\&=-\tr_g(\nabla_K+\frac{1}{2}f^{-1}\delta_{K}f)(\psi_{K}-\frac{1}{2}f^{-1}\delta_{K}f)
\\&=-\tr_g\nabla_{K}\psi_{K}
+\frac{1}{2}\tr_g\nabla_{K}(f^{-1}\delta_{K}f)-\frac{1}{2}\tr_g[f^{-1}\delta_{K}f,\psi_{K}]
\\&=\nabla_K^\ast\psi_{K}-\frac{1}{2}\nabla_K^\ast(f^{-1}\delta_{K}f)+\frac{1}{2}\tr_g[\psi_{K},f^{-1}\delta_{K}f]
\\&=\nabla_K^\ast\psi_K+\frac{1}{2}\tr_g\nabla(f^{-1}\delta_{K}f).
\end{split}\end{equation}
\par Set $f=h^{\ast K}h$ and $H(\bullet,\bullet)=K(h(\bullet),h(\bullet))$. Consider the gauge action $h(\bullet)=h\circ\bullet\circ h^{-1}$ and write $h(\nabla)=h(\nabla)_K+\psi_{h(\nabla),K}$ for the decomposition of $h(\nabla)$ in term of $K$, then
\begin{equation}\begin{split}
dK(u,v)
&=dH(h^{-1}(u),h^{-1}(v))
\\&=H((\nabla_H\circ h^{-1})u,h^{-1}(v))
+H(h^{-1}(u),(\nabla_H\circ h^{-1})v)
\\&=K(h(\nabla_H)u,v)+K(u,h(\nabla_H)v),
\end{split}\end{equation}
\begin{equation}\begin{split}
K(\psi_{h(\nabla),K}(u),v)
&=\frac{1}{2}\left(K(h(\nabla)u,v)+K(u,h(\nabla)v)-dK(u,v)\right)
\\&=\frac{1}{2}H((\nabla\circ h^{-1})u,h^{-1}(v))+\frac{1}{2}H(h^{-1}u,(\nabla\circ h^{-1})v)
\\&-\frac{1}{2}dH(h^{-1}(u),h^{-1}(v))
\\&=H((\psi_{H}\circ h^{-1})u,h^{-1}(v))
\\&=K(h(\psi_{H})u,v).
\end{split}\end{equation}
Namely, we have $h(\nabla)_K=h(\nabla_H)$, $\psi_{h(\nabla),K}=h(\psi_{\nabla,H})$, and hence it holds
\begin{equation}\begin{split}
h(\nabla)^\ast_K\psi_{h(\nabla),K}
&=-\tr_gh(\nabla)_{K}\psi_{h(\nabla),K}
\\&=-\tr_gh(\nabla_H)h(\psi_{H})
\\&=-h(\tr_g\nabla_{H}\psi_{H})
\\&=h(\nabla_H^\ast\psi_{H}).
\end{split}\end{equation}
Moreover, by $(\ref{exteriord})$ we also know $h(D_H)$ is the exterior differential operator relative to $h(\nabla)_K$.
\subsection{Harmonic metrics}
We shall apply the method of continuity to detect harmonic metrics on a simple vector bundle $(E,\nabla)$ over a compact Riemannian manifold $(M.g)$. Let us fix a background metric $K$ and consider the following family of equations:
\begin{equation}\begin{split}\label{continuitypath}
\nabla_{H_\epsilon}^\ast\psi_{H_\epsilon}=\epsilon\log f_\epsilon,\ f_\epsilon=K^{-1}H_\epsilon.
\end{split}\end{equation}
Pick a metric $K_0$ and a function $u$ with $\Delta_gu=-2\rank(E)^{-1}\tr\nabla_{K_0}^\ast \psi_{K_0}$, where $\Delta_g$ denotes the Beltrami-Laplace. Define $\hat{K}=e^uK_0$, $K=\hat{K}\exp(-\nabla_{\hat{K}}^\ast\psi_{\hat{K}})$, and then by $(\ref{DHKstarpsiHK})$, we obtain
\begin{equation}\begin{split}\label{initial}
\tr\nabla_K^\ast\psi_K=\tr\nabla_{\hat{K}}^\ast\psi_{\hat{K}}=0
\end{split}\end{equation}
Moreover, one easily concludes $f_1=\exp\nabla_{\hat{K}}^\ast\psi_{\hat{K}}$ must solve $(\ref{continuitypath})$ with $\epsilon=1$.
\par We denote by $I$ the set of $\epsilon\in[0,1]$ such that $(\ref{continuitypath})$ is solvable and thus $1\in J$. For the openness of $I$, we consider the operator
\begin{equation}\begin{split}
\hat{L}:(0,1]\times L_k^p(S^{+}_K)\rightarrow L^p_{k-2}(S_K),\ (\epsilon,f)\mapsto f\circ(\nabla_{Kf}^\ast\psi_{Kf}-\epsilon\log f),
\end{split}\end{equation}
where $L_k^p(S_K)$($L_k^p(S^{+}_K)$) is the Sobolev space of (positive)self-adjoint sections of $\End(E)$. Set
\begin{equation}\begin{split}
\nabla^f=\Ad f^{\frac{1}{2}}\circ\nabla\circ\Ad f^{-\frac{1}{2}},\ \delta_K^f=\Ad f^{-\frac{1}{2}}\circ\delta_K\circ\Ad f^{\frac{1}{2}},
\end{split}\end{equation}
where $\Ad f^{\frac{1}{2}}(\bullet)=f^{\frac{1}{2}}\circ\bullet\circ f^{-\frac{1}{2}}$ and $\Ad f^{-\frac{1}{2}}$ is defined accordingly. In the normal coordinate,
\begin{equation}\begin{split}\label{open1}
\tr_g\nabla^f\delta_{K}^f(f^{-\frac{1}{2}}\delta ff^{-\frac{1}{2}})
&=\tr_g\nabla^f(f^{-\frac{1}{2}}\delta_{K}(\delta ff^{-1})f^{\frac{1}{2}})
\\&=\tr_g\Ad f^{\frac{1}{2}}(\nabla(f^{-1}\delta_{K}(\delta ff^{-1})f))
\\&=\tr_g\Ad f^{\frac{1}{2}}(\nabla(f^{-1}\delta_{K}\delta f)+\nabla(f^{-1}\delta f\delta_{K}f^{-1}f))
\\&=\tr_g\Ad f^{\frac{1}{2}}(\nabla(f^{-1}\delta_{K}\delta f)+\nabla(\delta f^{-1}\delta_{K}f))
\\&=2\Ad f^{\frac{1}{2}}(\delta\nabla_{Kf}^\ast\psi_{Kf}).
\end{split}\end{equation}
It follows for any $(\epsilon,f_\epsilon)$ with $\nabla_{Kf_\epsilon}^\ast\psi_{Kf_\epsilon}-\epsilon\log f_\epsilon=0$,
\begin{equation}\begin{split}\label{open2}
\delta_2\hat{L}(\epsilon,f_\epsilon)&=\delta f_\epsilon(\nabla_{Kf_\epsilon}^\ast\psi_{Kf_\epsilon}-\epsilon\log f_\epsilon)+f\delta(\nabla_{Kf}^\ast\psi_{Kf}-\epsilon\log f)
\\&=f\delta\nabla_{Kf_\epsilon}^\ast\psi_{Kf_\epsilon}-\epsilon f_\epsilon\delta\log f_\epsilon
\\&=\frac{1}{2}f^{\frac{1}{2}}_\epsilon\tr_g\nabla^{f_\epsilon}\delta_{K}^{f_\epsilon}(f^{-\frac{1}{2}}_\epsilon\delta f_\epsilon f^{-\frac{1}{2}}_\epsilon)f^{\frac{1}{2}}_\epsilon-\epsilon f_\epsilon\delta\log f_\epsilon.
\end{split}\end{equation}
On the other hand, since $d(u,v)=(\nabla^f u,v)+(u,\delta^f_Kv)$ for $u,v\in\Gamma(\End(E))$, we have for any $\phi\in S_K$(where $S_K$ is the space of self-adjoint sections of $\End(E)$) with $\delta_2\hat{L}(\epsilon,f_\epsilon)(\phi)=0$,
\begin{equation}\begin{split}\label{open3}
\Delta_g|f^{-\frac{1}{2}}_\epsilon\phi f^{-\frac{1}{2}}_\epsilon|^2
&=(\tr_g\nabla^{f_\epsilon}\delta_{K}^{f_\epsilon}(f^{-\frac{1}{2}}_\epsilon\phi f^{-\frac{1}{2}}_\epsilon),f^{-\frac{1}{2}}_\epsilon\phi f^{-\frac{1}{2}}_\epsilon)
\\&+(f^{-\frac{1}{2}}_\epsilon\phi f^{-\frac{1}{2}}_\epsilon,\tr_g\delta_{K}^{f_\epsilon}\nabla^{f_\epsilon}(f^{-\frac{1}{2}}_\epsilon\phi f^{-\frac{1}{2}}_\epsilon))
\\&+|\nabla^{f_\epsilon}(f^{-\frac{1}{2}}_\epsilon\phi f^{-\frac{1}{2}}_\epsilon)|^2+|\delta_K^{f_\epsilon}(f^{-\frac{1}{2}}_\epsilon\phi f^{-\frac{1}{2}}_\epsilon)|^2
\\&\geq(\tr_g\nabla^{f_\epsilon}\delta_{K}^{f_\epsilon}(f^{-\frac{1}{2}}_\epsilon\phi f^{-\frac{1}{2}}_\epsilon),f^{-\frac{1}{2}}_\epsilon\phi f^{-\frac{1}{2}}_\epsilon)
\\&+(f^{-\frac{1}{2}}_\epsilon\phi f^{-\frac{1}{2}}_\epsilon,(\tr_g\nabla^{f_\epsilon}\delta_{K}^{f_\epsilon}(f^{-\frac{1}{2}}_\epsilon\phi f^{-\frac{1}{2}}_\epsilon))^\ast)
\\&=2(\tr_g\nabla^{f_\epsilon}\delta_{K}^{f_\epsilon}(f^{-\frac{1}{2}}_\epsilon\phi f^{-\frac{1}{2}}_\epsilon),f^{-\frac{1}{2}}_\epsilon\phi f^{-\frac{1}{2}}_\epsilon)
\\&=2(\epsilon f^{\frac{1}{2}}_\epsilon\delta\log f_\epsilon(\phi)f^{-\frac{1}{2}}_\epsilon,f^{-\frac{1}{2}}_\epsilon\phi f^{-\frac{1}{2}}_\epsilon)
\\&\geq2\epsilon|f^{-\frac{1}{2}}_\epsilon\phi f^{-\frac{1}{2}}_\epsilon|^2,
\end{split}\end{equation}
where we have used the following pointwise inequality(see \cite[pp.69-70]{LT1995}):
\begin{equation}\begin{split}
(f^{\frac{1}{2}}_\epsilon\delta\log f_\epsilon(\phi)f^{-\frac{1}{2}}_\epsilon,f^{-\frac{1}{2}}_\epsilon\phi f^{-\frac{1}{2}}_\epsilon)\geq|f^{-\frac{1}{2}}_\epsilon\phi f^{-\frac{1}{2}}_\epsilon|^2.
\end{split}\end{equation}
So the maximum principle shows $\phi=0$ and therefore one easily concludes that $\delta_2\hat{L}(\epsilon,f_\epsilon)$ is an isomorphism. By the standard implicit function theorem, we get
\begin{proposition}\label{openness}
$I$ is a nonempty open subset.
\end{proposition}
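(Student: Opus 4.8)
The plan is to verify the two assertions of the proposition---nonemptiness and openness---by assembling the preparatory computations $(\ref{initial})$ and $(\ref{open1})$--$(\ref{open3})$. Nonemptiness is immediate from the normalization: the metrics $\hat{K}$ and $K$ were arranged in $(\ref{initial})$ precisely so that $\tr\nabla_K^\ast\psi_K=0$ and $f_1=\exp\nabla_{\hat{K}}^\ast\psi_{\hat{K}}$ solves $(\ref{continuitypath})$ at $\epsilon=1$; hence $1\in I$ and $I\neq\emptyset$.

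For openness I would regard a solution of $(\ref{continuitypath})$ at parameter $\epsilon$ as a zero of $\hat{L}(\epsilon,\cdot)$ on the Banach manifold $L_k^p(S^{+}_K)$ (with $kp>\dim M$, so that the Sobolev embedding into $C^0$ makes positivity and the functional calculus defining $\log f$ well behaved) and apply the implicit function theorem. Fix $\epsilon_0\in I\cap(0,1]$ with solution $f_{\epsilon_0}$. By $(\ref{open2})$ the partial derivative $\delta_2\hat{L}(\epsilon_0,f_{\epsilon_0})$ is the second-order operator whose leading part is $\tfrac{1}{2}f_{\epsilon_0}^{1/2}\tr_g\nabla^{f_{\epsilon_0}}\delta_K^{f_{\epsilon_0}}\!\bigl(f_{\epsilon_0}^{-1/2}(\cdot)f_{\epsilon_0}^{-1/2}\bigr)f_{\epsilon_0}^{1/2}$ and whose remaining term $-\epsilon_0f_{\epsilon_0}\,\delta\log f_{\epsilon_0}$ is of order zero. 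Since $\nabla^{f_{\epsilon_0}}$ and $\delta_K^{f_{\epsilon_0}}$ are dual through $d(u,v)=(\nabla^{f_{\epsilon_0}}u,v)+(u,\delta_K^{f_{\epsilon_0}}v)$, the leading part has the scalar principal symbol of a generalized Laplacian on the bundle $S_K$; thus $\delta_2\hat{L}(\epsilon_0,f_{\epsilon_0})$ is elliptic and Fredholm of index zero between $L_k^p(S_K)$ and $L_{k-2}^p(S_K)$.

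It remains to establish injectivity, for which I would use $(\ref{open3})$ directly. For $\phi\in S_K$ with $\delta_2\hat{L}(\epsilon_0,f_{\epsilon_0})(\phi)=0$, the Bochner-type identity for $\Delta_g|f_{\epsilon_0}^{-1/2}\phi f_{\epsilon_0}^{-1/2}|^2$, after dropping the nonnegative gradient terms and inserting the pointwise inequality of \cite{LT1995}, gives
\begin{equation*}
\Delta_g\bigl|f_{\epsilon_0}^{-1/2}\phi f_{\epsilon_0}^{-1/2}\bigr|^2\geq 2\epsilon_0\bigl|f_{\epsilon_0}^{-1/2}\phi f_{\epsilon_0}^{-1/2}\bigr|^2.
\end{equation*}
Evaluating at a point of $M$ where $|f_{\epsilon_0}^{-1/2}\phi f_{\epsilon_0}^{-1/2}|^2$ attains its maximum forces the right-hand side to vanish, and since $\epsilon_0>0$ we conclude $\phi=0$. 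An injective Fredholm operator of index zero is invertible, so $\delta_2\hat{L}(\epsilon_0,f_{\epsilon_0})$ is an isomorphism and the implicit function theorem yields solutions of $(\ref{continuitypath})$ for all $\epsilon$ in a neighborhood of $\epsilon_0$; therefore $I$ is open.

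The main obstacle is the injectivity of the linearization: surjectivity comes for free once the index-zero Fredholm property and injectivity are in hand. Injectivity rests entirely on the strict positivity contributed by $\epsilon_0f_{\epsilon_0}\delta\log f_{\epsilon_0}$, encoded in the pointwise inequality of \cite{LT1995}, which upgrades the Bochner identity from $\Delta_g|\cdot|^2\geq 0$ to the displayed strict inequality. This is also exactly the term that degenerates at $\epsilon=0$---where the kernel becomes nontrivial and the maximum-principle argument collapses---so the openness argument is necessarily restricted to $(0,1]$, and the endpoint $\epsilon=0$ must be recovered afterward by a separate closedness or limit argument.
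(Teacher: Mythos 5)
Your proposal is correct and follows essentially the same route as the paper: nonemptiness from the normalization $(\ref{initial})$ with $f_1=\exp\nabla_{\hat K}^\ast\psi_{\hat K}$ solving $(\ref{continuitypath})$ at $\epsilon=1$, and openness on $(0,1]$ via the implicit function theorem applied to $\hat L$, with injectivity of $\delta_2\hat L(\epsilon,f_\epsilon)$ obtained from the Bochner inequality $(\ref{open3})$, the pointwise estimate of \cite{LT1995}, and the maximum principle. Your only addition is to make explicit what the paper compresses into ``one easily concludes'' --- the scalar principal symbol, the index-zero Fredholm property, and the Sobolev condition $kp>\dim M$ --- which is a faithful filling-in rather than a different argument.
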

\begin{lemma}\label{c1estimate}
If $||\log f_\epsilon||_{C^0(M)}$ are uniform bounded, so are $||f_\epsilon||_{C^k(M)}$ for any $k\geq1$.
\end{lemma}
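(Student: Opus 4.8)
The plan is to regard the continuity equation (\ref{continuitypath}) as a uniformly elliptic nonlinear equation for the positive self-adjoint endomorphism $f_\epsilon=K^{-1}H_\epsilon$ and to run an elliptic bootstrap in which every constant depends only on $\Lambda:=\sup_{\epsilon}\|\log f_\epsilon\|_{C^0(M)}$ and on the fixed data $(M,g)$, $K$, $\nabla$. By (\ref{DHKstarpsiHK}) the equation reads
\begin{equation*}
\tfrac12\,\tr_g\nabla\big(f_\epsilon^{-1}\delta_K f_\epsilon\big)=\epsilon\log f_\epsilon-\nabla_K^\ast\psi_K,
\end{equation*}
and since the eigenvalues of $f_\epsilon$ lie in $[e^{-\Lambda},e^{\Lambda}]$ the hypothesis immediately yields uniform $C^0$ bounds for $f_\epsilon$ and $f_\epsilon^{-1}$. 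Expanding the left hand side in a normal frame shows that its principal part is $\tfrac12 f_\epsilon^{-1}\Delta_g f_\epsilon$, where $\Delta_g$ denotes the rough Laplacian; multiplying by $2f_\epsilon$ then puts the equation into the schematic non-divergence form
\begin{equation*}
\Delta_g f_\epsilon=(\nabla_K f_\epsilon)\,f_\epsilon^{-1}(\nabla_K f_\epsilon)+\mathcal{L}_\epsilon(\nabla_K f_\epsilon)+\mathcal{R}_\epsilon,
\end{equation*}
where $\mathcal{L}_\epsilon$ is linear in $\nabla_K f_\epsilon$ with coefficients built from $\psi_K$ and the curvature, and $\mathcal{R}_\epsilon=2f_\epsilon(\epsilon\log f_\epsilon-\nabla_K^\ast\psi_K)$; both the coefficients of $\mathcal{L}_\epsilon$ and $\mathcal{R}_\epsilon$ are already controlled in $C^0$. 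The only dangerous term is therefore the critical quadratic gradient term, and it will suffice to produce a uniform $C^{1,\alpha}$ bound, after which Schauder theory bootstraps to all $C^k$.

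First I would extract a uniform $L^2$ bound on $\nabla_K f_\epsilon$. Tracing the displayed equation and using that each $(\nabla_K f_\epsilon)f_\epsilon^{-1}(\nabla_K f_\epsilon)$ has nonnegative trace, bounded below by $c\,|\nabla_K f_\epsilon|^2$ because $f_\epsilon^{-1}\geq e^{-\Lambda}\id$ (cf. the pointwise convexity inequalities of \cite[pp.~69--70]{LT1995}), one obtains the scalar differential inequality
\begin{equation*}
\Delta_g\tr f_\epsilon\ \geq\ c\,|\nabla_K f_\epsilon|^2-C,
\end{equation*}
the linear-in-gradient contribution of $\mathcal{L}_\epsilon$ being absorbed. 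Integrating over the closed manifold $M$ annihilates the left hand side, whence $\int_M|\nabla_K f_\epsilon|^2\,\dvol_g\leq C'$ uniformly; equivalently the energy density $e_\epsilon:=\big|f_\epsilon^{-1/2}(\delta_K f_\epsilon)f_\epsilon^{-1/2}\big|^2$ is uniformly bounded in $L^1(M)$.

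The heart of the matter, and the step I expect to be the main obstacle, is to upgrade this $L^1$ energy bound to a pointwise bound, i.e.\ to tame the critical quadratic gradient term. The key is a favorable sign: $f_\epsilon$ takes values in the symmetric space $GL(r)/U(r)$ of positive Hermitian forms, which is nonpositively curved, and this is exactly what the convexity inequalities of \cite{LT1995} encode. Concretely I would compute a Bochner-type formula for $e_\epsilon$; the terms quartic in $\nabla_K f_\epsilon$, which are the genuine nonlinearity, carry precisely the sign dictated by the nonpositive curvature of $GL(r)/U(r)$ and hence enter with the good sign, while the remaining terms -- arising from $F_\nabla$, from $\psi_K$, and from the source $\epsilon\log f_\epsilon-\nabla_K^\ast\psi_K$ -- are of lower order and estimated by $C(e_\epsilon+1)$. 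I emphasize that this is a purely local computation, so it needs no flatness of $\nabla$, unlike the global equivariant harmonic map methods. It should yield
\begin{equation*}
\Delta_g e_\epsilon\ \geq\ -C\,(e_\epsilon+1),
\end{equation*}
so that $e_\epsilon+1\geq0$ is a subsolution of a uniformly elliptic equation. A Moser iteration (the local boundedness estimate for subsolutions, valid for any positive exponent) then bounds $\sup_M e_\epsilon$ by $C(\|e_\epsilon\|_{L^1(M)}+1)$, which is uniform by the previous step, furnishing a uniform bound on $|\nabla_K f_\epsilon|$, i.e.\ a uniform $C^{0,1}$ control.

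Finally, with $|\nabla_K f_\epsilon|$ uniformly bounded the entire right hand side $(\nabla_K f_\epsilon)f_\epsilon^{-1}(\nabla_K f_\epsilon)+\mathcal{L}_\epsilon(\nabla_K f_\epsilon)+\mathcal{R}_\epsilon$ is uniformly bounded in $L^\infty(M)$; the $L^p$ elliptic estimate for $\Delta_g$ gives $f_\epsilon\in W^{2,p}$ for every $p<\infty$, uniformly, hence $f_\epsilon\in C^{1,\alpha}$ uniformly for all $\alpha\in(0,1)$ by Sobolev embedding. Now the right hand side lies in $C^{0,\alpha}$ with a uniform bound, so Schauder theory upgrades $f_\epsilon$ to $C^{2,\alpha}$; differentiating the equation and iterating, one obtains uniform $C^{k,\alpha}$ bounds for every $k$. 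Since all constants depend only on $\Lambda$ and the fixed geometric data, this would give the asserted uniform $\|f_\epsilon\|_{C^k(M)}$ bounds for each $k\geq1$.
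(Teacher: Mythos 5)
Your proposal is correct, and it reaches the same two milestones as the paper -- a uniform bound on the first derivative of $f_\epsilon$ via a Bochner-type differential inequality on the energy density, followed by an $L^p$/Schauder bootstrap -- but it closes the crucial gradient estimate by a genuinely different mechanism. The paper never integrates: it proves the two pointwise inequalities $\Delta_g\tr f_\epsilon\geq-C_3+|\psi_{H_\epsilon}|^2_{H_\epsilon}$ and $\Delta_g|\psi_{H_\epsilon}|^2_{H_\epsilon}\geq-C_5-C_6|\psi_{H_\epsilon}|^2_{H_\epsilon}$, then forms the combination $A\tr f_\epsilon+|\psi_{H_\epsilon}|^2_{H_\epsilon}$ with $A$ large, so that the good term $+|\psi_{H_\epsilon}|^2$ coming from $\Delta_g\tr f_\epsilon$ absorbs the bad $-C_6|\psi_{H_\epsilon}|^2$, and evaluates at a maximum point: a pure maximum-principle argument, no iteration needed. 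You instead integrate the first inequality over the closed manifold to get a uniform $L^1$ bound on the energy density, and then apply Moser iteration to the subsolution inequality $\Delta_g e_\epsilon\geq-C(e_\epsilon+1)$ -- which is exactly the paper's second inequality, restated for $e_\epsilon$ in place of $|\psi_{H_\epsilon}|^2_{H_\epsilon}$ (the two are comparable up to constants depending on $\Lambda$ and $\psi_K$) -- to upgrade $L^1$ to $L^\infty$. Your accounting of the bad terms is the right one and matches the paper's: $F_\nabla^{\ast H_\epsilon}$ is controlled by the $C^0$ bound on $f_\epsilon$, the source term $\epsilon\nabla_{H_\epsilon}\log f_\epsilon$ is controlled by the gradient since $\log$ is Lipschitz on spectra in $[e^{-\Lambda},e^{\Lambda}]$, and the quartic commutator term $|[\psi_{H_\epsilon},\psi_{H_\epsilon}]|^2$ enters with the good sign, reflecting the nonpositive curvature of $GL(r)/U(r)$ as you say; so the step you flag as the main obstacle is sound, though you assert rather than carry out the computation the paper performs in full. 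What each route buys: the paper's auxiliary-function trick is more elementary and self-contained (maximum principle only), while your integration-plus-Moser route uses heavier but entirely standard machinery and is arguably more robust, since it only needs the differential inequalities in an integrated/weak sense. The final bootstrap is essentially identical in both: your non-divergence form $\Delta_gf_\epsilon=(\nabla_Kf_\epsilon)f_\epsilon^{-1}(\nabla_Kf_\epsilon)+\mathcal{L}_\epsilon(\nabla_Kf_\epsilon)+\mathcal{R}_\epsilon$ is the same equation the paper feeds into elliptic $L^p$-theory via $\Delta_{\nabla_K}f_\epsilon$, and once $|\nabla f_\epsilon|$ is bounded the critical quadratic term is harmless, so the $W^{2,p}\to C^{1,\alpha}\to C^{k,\alpha}$ ladder goes through as you describe.
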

\begin{proof}
In the following, we shall use $C_j(j=1,2,3...)$ to denote uniform constants which may depend on $||\log f_\epsilon||_{C^0(M)}$ and also observe
\begin{equation}\begin{split}
|\psi_{H_\epsilon}|_{H_\epsilon}^2\leq C_1(|\psi_K|^2+|\nabla f_\epsilon|^2),
\end{split}\end{equation}
\begin{equation}\begin{split}
|\nabla f_\epsilon|^2\leq C_2(|\psi_K|^2+|\psi_{H_\epsilon}|^2_{H_\epsilon}).
\end{split}\end{equation}
First of all, we have
\begin{equation}\begin{split}
\Delta_g\tr f_\epsilon
&=\tr_gdK(f^{-1}_\epsilon\delta_{K}f_\epsilon,f_\epsilon)
\\&=(\tr_g\nabla(f^{-1}_\epsilon\delta_{K}f_\epsilon),f_\epsilon)+(f^{-1}_\epsilon\delta_{K}f_\epsilon,\delta_Kf_\epsilon)
\\&=2(\nabla_{\epsilon}^\ast\psi_{H_{\epsilon}}-\nabla_K^\ast\psi_{K},f_\epsilon)
+(f^{-1}_i\delta_K f_\epsilon,\delta_Kf_\epsilon)
\\&\geq-C_3+|\psi_{H_\epsilon}|^2_{H_\epsilon}.
\end{split}\end{equation}
On the other hand, we compute
\begin{equation}\begin{split}
\Delta_g|\psi_{H_\epsilon}|^2_{H_\epsilon}&=-2(\nabla^\ast_{H_\epsilon}\nabla_{H_\epsilon}\psi_{H_\epsilon},\psi_{H_\epsilon})_{H_\epsilon}+2|\nabla_{H_\epsilon}\psi_{H_\epsilon}|^2_{H_\epsilon}
\\&=-2g^{ij}(F_{\nabla_{H_\epsilon}}(\bullet,\frac{\pa}{\pa x^i})(\psi_{H_\epsilon}(\frac{\pa}{\pa x^j})),\psi_{H_\epsilon})_{H_\epsilon}
\\&+2g^{ij}(\psi_{H_\epsilon}(F_{\nabla^{g}}(\bullet,\frac{\pa}{\pa x^i})(\frac{\pa}{\pa x^j})),\psi_{H_\epsilon})_{H_\epsilon}+2|\nabla_{H_\epsilon}\psi_{H_\epsilon}|_{H_\epsilon}^2
\\&-2(\nabla_{H_\epsilon}\nabla_{H_\epsilon}^\ast\psi_{H_\epsilon},\psi_{H_\epsilon})_{H_\epsilon}
-2(D_{H_\epsilon}^\ast D_{H_\epsilon}\psi_{H_\epsilon},\psi_{H_\epsilon})_{H_\epsilon}.
\end{split}\end{equation}
We note the relation
\begin{equation}\begin{split}
\left\{ \begin{array}{ll}
D_{H_\epsilon}\psi_{H_\epsilon}=\frac{1}{2}(F_\nabla+F_\nabla^{\ast H_\epsilon}),\\
F_{\nabla_{H_\epsilon}}+\psi_{H_\epsilon}\wedge\psi_{H_\epsilon}=\frac{1}{2}(F_\nabla-F_\nabla^{\ast H_\epsilon}).
\end{array}\right.
\end{split}\end{equation}
It follows
\begin{equation}\begin{split}
\Delta_g|\psi_{H_\epsilon}|^2_{H_\epsilon}
&=(g^{ij}[[\psi_{H_\epsilon},\psi_{H_\epsilon}(\frac{\pa}{\pa x^i})]-(F_\nabla-F_\nabla^{\ast H_\epsilon})(\bullet,\frac{\pa}{\pa x^i}),\psi_{H_\epsilon}(\frac{\pa}{\pa x^j})],\psi_{H_\epsilon})_{H_\epsilon}
\\&+2(\psi_{H_\epsilon}\circ\Ric_g,\psi_{H_\epsilon})_{H_\epsilon}+2|\nabla_{H_\epsilon}\psi_{H_\epsilon}|_{H_\epsilon}^2
\\&-2\epsilon(\nabla_{H_\epsilon}\log f_\epsilon,\psi_{H_\epsilon})_{H_\epsilon}
-(D_{H_\epsilon}^\ast(F_\nabla+F_\nabla^{\ast H_\epsilon}),\psi_{H_\epsilon})_{H_\epsilon}
\\&\geq|[\psi_{H_\epsilon},\psi_{H_\epsilon}]|^2_{H_\epsilon}-(g^{ij}[(F_\nabla-F_\nabla^{\ast H_\epsilon})(\bullet,\frac{\pa}{\pa x^i}),\psi_{H_\epsilon}(\frac{\pa}{\pa x^j})],\psi_{H_\epsilon})_{H_\epsilon}
\\&-C_4|\psi_{H_\epsilon}|^2_{H_\epsilon}+2|\nabla_{H_\epsilon}\psi_{H_\epsilon}|_{H_\epsilon}^2
\\&+|\nabla_{H_\epsilon}\log f_\epsilon|^2_{H_\epsilon}+|D_{H_\epsilon}^\ast(F_\nabla+F_\nabla^{\ast H_\epsilon})|_{H_\epsilon}|\psi_{H_\epsilon}|_{H_\epsilon}
\\&\geq-C_5-C_6|\psi_{H_\epsilon}|^2_{H_\epsilon}.
\end{split}\end{equation}
Therefore it holds for suitable constants $A$ and $B$,
\begin{equation}\begin{split}
\Delta_g(A\tr f_\epsilon+|\psi_{H_\epsilon}|_{H_\epsilon}^2)
\geq-C_7+B|\psi_{H_\epsilon}|^2_{H_\epsilon},
\end{split}\end{equation}
and thus $|\psi_{H_\epsilon}|^2_{H_\epsilon}(x)\leq C_8$, where $A\tr f_\epsilon+|\psi_{H_\epsilon}|_{H_\epsilon}^2$ attains its maximum at $x$.  This implies
\begin{equation}\begin{split}
\max\limits_{M}|\nabla f_\epsilon|^2
\leq C_9(1+|\psi_{H_\epsilon}|_{H_\epsilon}^2(x))
\leq C_{10}.
\end{split}\end{equation}
\par Now for the Laplician $\Delta_{\nabla_K}=\nabla_K^\ast\nabla_K+\nabla_K\nabla_K^\ast$, we have
\begin{equation}\begin{split}
\Delta_{\nabla_K}f_\epsilon
&=-\frac{1}{2}\tr_g(\nabla\delta_{K}f_\epsilon+\delta_{K}\nabla f_\epsilon)
+\frac{1}{2}\tr_g[\psi_K,[\psi_K,f_\epsilon]].
\end{split}\end{equation}
The elliptic $L^p$-theory indicates
\begin{equation}\begin{split}
||f_\epsilon||_{L_2^p}
&\leq C_{11}(||f_\epsilon||_{L^p}+||\Delta_{\nabla_K}f_\epsilon||_{L^p})
\\&\leq C_{12}(1+||\tr_g\nabla\delta_Kf_\epsilon||_{L^p})
\\&\leq C_{13},
\end{split}\end{equation}
where we have used $\nabla_K^\ast\psi_K+\frac{1}{2}\tr_g(\nabla f_\epsilon^{-1}\delta_Kf_\epsilon+f_\epsilon^{-1}\nabla\delta_Kf_\epsilon)=\epsilon\log f_\epsilon$. Finally, the higher order estimates follow from the elliptic regularity.
\end{proof}
\begin{lemma}\label{c0contradiction}
If $\lim\sup\limits_{\epsilon\rightarrow0}||\log f_\epsilon||_{L^2}=\infty$, there is a nontrivial $\nabla$-invariant sub-bundle of $E$.
\end{lemma}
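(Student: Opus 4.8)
The plan is to run the destabilizing sub-bundle construction of Uhlenbeck--Yau \cite{UY1986} and Simpson \cite{Si1988}, adapted to the Riemannian connection $\nabla$. Throughout write $s_\epsilon=\log f_\epsilon$, a $K$-self-adjoint section of $\End(E)$. First I would record that each $s_\epsilon$ is trace-free: tracing $(\ref{continuitypath})$ and combining $(\ref{initial})$ with $(\ref{DHKstarpsiHK})$ gives the scalar equation $\Delta_g(\tr s_\epsilon)=2\epsilon\,\tr s_\epsilon$, and since $\epsilon>0$ on the closed manifold $M$ the maximum principle forces $\tr s_\epsilon\equiv0$. Next, pairing $(\ref{continuitypath})$ with $s_\epsilon$ in $L^2$ and integrating by parts by means of $(\ref{DHKstarpsiHK})$, while using $f_\epsilon^{-1}\delta_Kf_\epsilon=\Phi(s_\epsilon)(\delta_Ks_\epsilon)$ with $\Phi(s)=g(\mathrm{ad}\,s)$ and $g(x)=(1-e^{-x})/x>0$, produces the key inequality
\begin{equation*}
\epsilon\int_M|s_\epsilon|^2\,\dvol_g+\frac12\int_M\big\langle\Phi(s_\epsilon)(\delta_Ks_\epsilon),\delta_Ks_\epsilon\big\rangle\,\dvol_g\le C\|s_\epsilon\|_{L^2},
\end{equation*}
where $C$ depends only on $\|\nabla_K^\ast\psi_K\|_{L^2}$. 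The positivity of $\Phi$, a consequence of the self-adjointness of $\mathrm{ad}\,s_\epsilon$ and $g>0$, is what makes both terms on the left non-negative.

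Assuming $\limsup_{\epsilon\to0}\|s_\epsilon\|_{L^2}=\infty$, I would choose $\epsilon_i\to0$ with $m_i:=\|s_{\epsilon_i}\|_{L^2}\to\infty$ and normalize $u_i:=s_{\epsilon_i}/m_i$, so that $\|u_i\|_{L^2}=1$ and $\tr u_i=0$. A Bochner computation in the spirit of Lemma \ref{c1estimate}, giving the weak subsolution estimate $\Delta_g|s_\epsilon|\le C(1+|s_\epsilon|)$ (the contribution of $\epsilon s_\epsilon$ from the right-hand side of $(\ref{continuitypath})$ being absorbed since $\epsilon\le1$), followed by Moser iteration, yields the a priori bound $\sup_M|s_\epsilon|\le C(1+\|s_\epsilon\|_{L^2})$ and hence the uniform $C^0$ bound $\sup_M|u_i|\le C$. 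Dividing the displayed inequality by $m_i^2$ and using $\delta_Ks_{\epsilon_i}=m_i\,\delta_Ku_i$ gives the weighted smallness
\begin{equation*}
\int_M\big\langle\Phi(m_iu_i)(\delta_Ku_i),\delta_Ku_i\big\rangle\,\dvol_g\le\frac{C}{m_i}\to0.
\end{equation*}

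The heart of the matter is Simpson's analytic lemma \cite{Si1988,LT1995}. Since $g(m_ix)\to+\infty$ on $\{x<0\}$ as $m_i\to\infty$, the weights $\Phi(m_iu_i)$ blow up precisely on the blocks of $\delta_Ku_i$ where the eigenvalue of $u_i$ increases, so the displayed smallness forces, in the weak limit, the bounded self-adjoint trace-free $u_\infty$ to have constant eigenvalues and, for each of its spectral projections $\pi$, the weak identity $(\id-\pi)\,\nabla\pi=0$; the $C^0$ bound together with $\|u_i\|_{L^2}=1$ guarantees $u_\infty\neq0$, and trace-freeness then forbids $u_\infty$ from being a multiple of the identity, so at least one $\pi$ is a proper nontrivial projection. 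Its image is thus a weakly $\nabla$-invariant sub-bundle, and I would finally invoke the Uhlenbeck--Yau regularity theorem for weakly invariant sub-bundles \cite{UY1986,LT1995} to conclude that $\pi$ is, after modification on a set of measure zero, smooth, giving the desired nontrivial $\nabla$-invariant sub-bundle of $E$.

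The main obstacle is the last paragraph. The positive quadratic form $\int_M\langle\Phi(m_iu_i)\delta_Ku_i,\delta_Ku_i\rangle$ degenerates on the blocks where the eigenvalue of $u_i$ decreases (there $g(m_ix)\to0^+$ for $x>0$), so one does \emph{not} obtain a bound on $u_i$ in $L_1^2$ and ordinary Rellich compactness is unavailable; extracting the limit and deriving the exact algebraic relation $(\id-\pi)\nabla\pi=0$ requires the delicate test-function argument of Simpson applied to smoothings of spectral cut-offs of $u_i$. Establishing the subsequent regularity in the present non-flat, merely $\nabla$-invariant (rather than $\bar\partial$-holomorphic) setting is the second delicate point, as it rests on a Riemannian transcription of the Uhlenbeck--Yau removable-singularity theorem.
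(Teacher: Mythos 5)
Your skeleton is the same as the paper's: the trace normalization, the weighted integral inequality coming from expanding $\Delta_g|\log f_\epsilon|^2$ (the paper's (\ref{keyinequality})), the $C^0$ bound on the rescaled endomorphism, Simpson's eigenvalue and spectral-projection lemmas \cite[Lemmas 5.5, 5.6]{Si1988}, and nontriviality of the limit from trace considerations. But your closing diagnosis contains one error and one genuine gap. The error: the compactness difficulty you call ``the main obstacle'' does not exist, because you discarded the blow-up factor. Dividing by $m_i$ to get $\int_M\langle\Phi(m_iu_i)(\delta_Ku_i),\delta_Ku_i\rangle\,\dvol_g\le C/m_i$ throws away exactly the coercivity you need; keep the factor and the relevant weight is $m_i\Phi(m_iu_i)$. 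Since $l\Phi(lx,ly)$ increases monotonically to $(x-y)^{-1}$ for $x>y$ and to $+\infty$ for $x\le y$, on the compact eigenvalue range $[-C,C]$ guaranteed by your sup bound this rescaled weight is eventually bounded below by a positive constant such as $(4C)^{-1}$. Hence $\|u_i\|_{L_1^2}$ \emph{is} uniformly bounded --- this is precisely how the paper (following \cite{Si1988}) proceeds --- so $u_i$ converges weakly in $L_1^2$ and, by Rellich, strongly in $L^2$; the strong $L^2$ convergence is also what legitimately gives $\|u_\infty\|_{L^2}=1$, whereas your appeal to the $C^0$ bound alone would not exclude a vanishing weak limit. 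As written, your limit extraction is a hole: without an $L_1^2$ limit the relation $(\id_E-\pi)\circ\nabla\pi=0$ has no distributional meaning, and the ``delicate test-function argument'' you defer to is exactly what the $L_1^2$ bound renders routine.

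The more serious gap is the final regularity step. The Uhlenbeck--Yau theorem you invoke \cite{UY1986,LT1995} concerns weakly holomorphic subbundles, i.e.\ $L_1^2$ projections $\pi$ with $(\id_E-\pi)\circ\ol\pa\pi=0$ over compact K\"ahler manifolds; there is no off-the-shelf ``Riemannian transcription,'' and you leave it unproved. The paper avoids this machinery entirely, and the mechanism is worth noting: in the real setting one has \emph{both} the weak relation $(\id_E-\Pi_s)\circ\nabla\Pi_s=0$ (\ref{weak1}) \emph{and} its $K$-adjoint $\Pi_s\circ\delta_K(\id_E-\Pi_s)=0$ (\ref{weak2}), i.e.\ control of the full derivative of $\Pi_s$ rather than only its $(0,1)$-part. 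Combined with $\Pi_s=\Pi_s^2$ these yield the pointwise identity $\nabla\Pi_s=2\Pi_s\circ[\psi_K,\Pi_s]$ (\ref{Dpi}), whose right-hand side is bounded; smoothness then follows by elementary bootstrapping ($\Pi_s\in L_1^p$ for all $p$, then $\Pi_s\in L_2^p$ via (\ref{Dpi2}), and inductively $C^\infty$), the constant-rank smooth projection defines a subbundle by \cite[Proposition 2.10]{We2008}, and (\ref{weak1}) gives $\nabla$-invariance. So your plan is salvageable, but the removable-singularity appeal must be replaced by this algebraic identity plus elliptic bootstrap --- which is precisely the point where the Riemannian problem is \emph{easier}, not harder, than the holomorphic one.
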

\begin{proof}
We may assume $l_\epsilon=||\log f_\epsilon||_{L^2}\rightarrow\infty$ as $\epsilon\rightarrow0$. In a straightford way, we compute
\begin{equation}\begin{split}\label{keyinequality}
\Delta_g|\log f_\epsilon|^2
&=2\tr_gdK(\delta_{K}\log f_\epsilon,\log f_\epsilon)
\\&=2\tr_gdK(f^{-1}_\epsilon\delta_{K}f_\epsilon,\log f_\epsilon)
\\&=2(\tr_g\nabla(f^{-1}_\epsilon\delta_{K}f_\epsilon),\log f_\epsilon)
+2(f^{-1}\delta_{K}f_\epsilon,\delta_K\log f_\epsilon)
\\&=4(\nabla_{H_\epsilon}^\ast\psi_{H_\epsilon}-\nabla_K^\ast\psi_K,\log f_\epsilon)+2(f^{-1}\delta_{K}f_\epsilon,\delta_K\log f_\epsilon)
\\&=4(\nabla_{H_\epsilon}^\ast\psi_{H_\epsilon}-\nabla_K^\ast\psi_K,\log f_\epsilon)+2(\Phi[\log f_\epsilon](\nabla\log f_\epsilon),\nabla\log f_\epsilon)
\\&\geq4\epsilon|\log h_\epsilon|^2-4|\nabla_K^\ast\psi_K||\log h_\epsilon|,
\end{split}\end{equation}
where $\Phi(x,y)=(y-x)^{-1}(e^{y-x}-1)$ for $x\neq y$, $\Phi(x,x)=1$ and
\begin{equation}\begin{split}
\log f_\epsilon=\lambda_\alpha e_\alpha\otimes e^\alpha,\
\nabla\log f_\epsilon=(\nabla\log f_\epsilon)^\alpha_\beta e_\alpha\otimes e^\beta,
\end{split}\end{equation}
\begin{equation}\begin{split}
\Phi[\log f_\epsilon](\nabla\log f_\epsilon)=\Phi(\lambda_\beta,\lambda_\alpha)(\nabla\log f_\epsilon)^\alpha_\beta e_\alpha\otimes e^\beta.
\end{split}\end{equation}
Let $\widetilde{\log f_\epsilon}=l_\epsilon^{-1}\log f_\epsilon$ and $(\ref{keyinequality})$ implies
\begin{equation}\begin{split}
|\widetilde{\log f_\epsilon}|\leq l_\epsilon^{-1}(1+l_i)\leq C,
\end{split}\end{equation}
\begin{equation}\begin{split}
\int_Ml_\epsilon(\Phi[l_\epsilon\widetilde{\log f_\epsilon}](\nabla\widetilde{\log f_\epsilon}),\nabla\widetilde{\log f_\epsilon})\dvol_g
&=-2\int_M(\nabla_{H_\epsilon}^\ast\psi_{H_\epsilon}-\nabla_K^\ast\psi_K,\widetilde{\log f_\epsilon})\dvol_g,
\end{split}\end{equation}
where $C$ is a uniform constant independing on $\epsilon$. Since
\begin{equation}\begin{split}
l\Phi(lx,ly)
&=\left\{ \begin{array}{ll}
l,\ x\leq y, \\
\frac{e^{l(y-x)}-1}{y-x},\ x>y,
\end{array}\right.
\longrightarrow
\left\{ \begin{array}{ll}
\infty,\ x\leq y, \\
(x-y)^{-1},\ x>y,
\end{array}\right.
\end{split}\end{equation}
increases monotonically when $l\rightarrow\infty$, it holds \begin{equation}\begin{split}
\int_{M}(\rho[\widetilde{\log f_\epsilon}](\nabla\widetilde{\log f_\epsilon}),\nabla\widetilde{\log f_\epsilon})\dvol_g\leq2\int_{M}(\nabla_K^\ast\psi_K,\widetilde{\log f_\epsilon})\dvol_g.
\end{split}\end{equation}
for any $\rho:\mathbb{R}\times\mathbb{R}\rightarrow\mathbb{R}$ with $\rho(x,y)<(x-y)^{-1}$ whenever $x>y$ and $\epsilon$ is small enough. Due to the zeroth order estimate of $|\widetilde{\log f_\epsilon}|$, by taking small $\rho$, it follows $||\nabla\widetilde{\log f_\epsilon}||_{L_1^2}$ are uniform bounded. We may assume $||u_\infty||_{L^2}=1$ and
\begin{equation}\begin{split}
\widetilde{\log f_\epsilon}\rightarrow u_\infty\in L_1^2S_K,
\end{split}\end{equation}
weakly in $L_1^2$-topology and strongly in $L^2$-topology. Meanwhile, we have
\begin{equation}\begin{split}\label{Simpson}
\int_{M}(\rho[u_\infty](\nabla u_\infty),\nabla u_\infty)\dvol_g\leq2\int_{M}(\nabla_K^\ast\psi_K,u_\infty)\dvol_g<\infty.
\end{split}\end{equation}
\par As long as $(\ref{Simpson})$ is established, by using a similar discussion as \cite[Lemma 5.5]{Si1988} we know the eigenvalues of $u_\infty$ are constants almost everywhere. Since $u_\infty\neq0$ and
\begin{equation}\begin{split}
\int_M\tr u_\infty\dvol_g
&=\lim\limits_{\epsilon\rightarrow0}\int_M\tr\widetilde{\log f_\epsilon}\dvol_g
\\&=\lim\limits_{\epsilon\rightarrow0}\epsilon^{-1}l_\epsilon^{-1}\int_M\tr\nabla^\ast_{H_\epsilon}\psi_{H_\epsilon}\dvol_g
\\&=0,
\end{split}\end{equation}
it admits at least two distinct eigenvalues $\lambda_k$ with $k=1,...,\gamma$ and $2\leq\gamma\leq r$(we may assume $\rank(E)=r>1$). We construct smooth functions $f_s:\mathbb{R}\rightarrow[0,\infty)$, $s=1,2,3,...,\gamma-1$, such that $f_s(x)=1$ if $x\leq\lambda_s$, and $f_s(x)=0$ if $x>\lambda_s$. Set $\Pi_s=f_s[u_\infty]$ which means $f_s$ acts on the eigenvalues of $u_\infty$, it is known that $\Pi_s\neq0,\id_E$ and
\begin{equation}\begin{split}\label{projection}
\Pi_s^\ast=\Pi_s=\Pi_s^2.
\end{split}\end{equation}
Applying $(\ref{Simpson})$ and a similar discussion as \cite[Lemma 5.6]{Si1988}, it yields
\begin{equation}\begin{split}\label{weak1}
0&=(\id_E-\Pi_s)\circ\nabla\Pi_s
\\&=(\id_E-\Pi_s)\circ\nabla\circ\Pi_s
\\&=\nabla(\id_E-\Pi_s)\circ\Pi_s,
\end{split}\end{equation}
and taking adjoint gives
\begin{equation}\begin{split}\label{weak2}
0=&\delta_K\Pi_s\circ(\id_E-\Pi_s)
\\&=-\Pi_s\circ\delta_K\circ(\id_E-\Pi_s)
\\&=\Pi_s\circ\delta_K(\id_E-\Pi_s).
\end{split}\end{equation}
\par Next we show each $\Pi_s$ is smooth. Note Loftin \cite{Lo2009} also considered the similar issue in the study of Donaldson-Uhlenbeck-Yau type theorem for flat vector bundles over special affine manifolds. It is also remarked that we are dealing with the real operator $\nabla$ rather than $\ol\pa$. Indeed, it is purely a local matter, we first employ $(\ref{weak1})$ and $(\ref{weak2})$ to deduce
\begin{equation}\begin{split}\label{Dpi}
\nabla\Pi_s
&=(\id_E-\Pi_s)\circ\nabla\Pi_s+\Pi_s\circ\nabla\Pi_s
\\&=\Pi_s\circ\nabla\Pi_s
\\&=\Pi_s\circ\delta_K\Pi_s+2\Pi\circ[\psi_K,\Pi_s]
\\&=\Pi_s\circ\delta_K(\id_E-\Pi_s)+2\Pi_s\circ[\psi_K,\Pi_s]
\\&=2\Pi_s\circ[\psi_K,\Pi_s],
\end{split}\end{equation}
from which and $u_\infty\in L^\infty(S_K)$ we know $\Pi_s\in L_1^{p}(S_K)$ for any $p>0$. Using
\begin{equation}\begin{split}\label{Dpi2}
\nabla^2\Pi_s
&=2\nabla\Pi_s\circ[\psi_K,\Pi_s]+2\Pi_s\circ\nabla[\psi_K,\Pi_s],
\end{split}\end{equation}
we have $\Pi_s\in L_2^{p}(S_K)$ for any $p>0$. Applying the process repeatedly, we find $\Pi_s\in L_k^{p}(S_K)$ for any $k,p>0$ and thus smooth. 
\par Finally, since each $\Pi_s: E\rightarrow E$ is a nontrivial smooth homomorphisn of bundles and has constant rank, it represents a nontrivial sub-bundle(see \cite[Proposition 2.10]{We2008})
\begin{equation}\begin{split}
F_s\triangleq\im\Pi_s\hookrightarrow E.
\end{split}\end{equation}
Furthermore, $(\ref{weak1})$ indicates $F_s$ is preserved by $\nabla$, a contradiction to the simpleness.
\end{proof}
\begin{proposition}\label{existence}
$I=[0,1]$.
\end{proposition}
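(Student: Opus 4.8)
The plan is to complete the method of continuity by proving that $I$ is closed and reaches the endpoint $\epsilon=0$. Since $1\in I$ and, by Proposition~\ref{openness}, $I\cap(0,1]$ is relatively open in $(0,1]$, the whole argument rests on one quantity: a uniform bound for $\|\log f_\epsilon\|_{C^0(M)}$. Indeed, once such a bound is available along a sequence of parameters, Lemma~\ref{c1estimate} promotes it to uniform $C^k$ bounds for every $k$, and then Arzel\`a--Ascoli extracts a subsequence of the $f_\epsilon$ converging in $C^\infty$ to a positive self-adjoint endomorphism whose limit automatically solves $(\ref{continuitypath})$ at the limiting parameter. Thus I would reduce everything to two $C^0$ estimates, one valid for $\epsilon$ bounded away from $0$ and one valid as $\epsilon\to0$.

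For the first estimate I would use the differential inequality $(\ref{keyinequality})$ obtained in the proof of Lemma~\ref{c0contradiction},
\begin{equation}\begin{split}
\Delta_g|\log f_\epsilon|^2\geq 4\epsilon|\log f_\epsilon|^2-4|\nabla_K^\ast\psi_K|\,|\log f_\epsilon|.
\end{split}\end{equation}
Evaluating at a maximum point $x_0$ of $|\log f_\epsilon|^2$, where $\Delta_g|\log f_\epsilon|^2\leq0$, gives $\|\log f_\epsilon\|_{C^0(M)}\leq\epsilon^{-1}\max_M|\nabla_K^\ast\psi_K|$. Hence if $\epsilon_n\in I$ and $\epsilon_n\to\epsilon_\infty\in(0,1]$, then for large $n$ one has $\epsilon_n\geq\epsilon_\infty/2$ and the bound is uniform in $n$; the reduction above then shows $\epsilon_\infty\in I$. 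Therefore $I\cap(0,1]$ is relatively closed as well as open and nonempty, and since $(0,1]$ is connected we conclude $(0,1]\subseteq I$.

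It remains to reach $\epsilon=0$. I would pick any sequence $\epsilon_n\to0$; by the previous step each $\epsilon_n\in I$ with solution $f_{\epsilon_n}$. Were $\|\log f_{\epsilon_n}\|_{L^2(M)}$ unbounded, Lemma~\ref{c0contradiction} would yield a nontrivial $\nabla$-invariant sub-bundle of $E$, contradicting simpleness; hence $\sup_n\|\log f_{\epsilon_n}\|_{L^2(M)}<\infty$. To convert this into a $C^0$ bound I would again invoke $(\ref{keyinequality})$, now in the form $\Delta_g|\log f_{\epsilon_n}|^2\geq -C(1+|\log f_{\epsilon_n}|^2)$, so that $|\log f_{\epsilon_n}|^2$ is a subsolution; the standard local boundedness estimate via Moser iteration gives
\begin{equation}\begin{split}
\|\log f_{\epsilon_n}\|_{C^0(M)}^2\leq C\bigl(1+\|\log f_{\epsilon_n}\|_{L^2(M)}^2\bigr),
\end{split}\end{equation}
uniformly in $n$. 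Applying Lemma~\ref{c1estimate} and extracting a $C^\infty$ limit $f_0$ as before, and letting $\epsilon_n\to0$ in $\nabla_{H_{\epsilon_n}}^\ast\psi_{H_{\epsilon_n}}=\epsilon_n\log f_{\epsilon_n}$ while $\log f_{\epsilon_n}$ stays bounded, gives $\nabla_{H_0}^\ast\psi_{H_0}=0$. Thus $0\in I$ and $I=[0,1]$.

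The main obstacle is precisely the $C^0$ estimate at the endpoint $\epsilon=0$: the maximum-principle bound degenerates like $\epsilon^{-1}$ and is useless there, so one cannot simply pass to the limit. This is where the simpleness hypothesis is indispensable — via Lemma~\ref{c0contradiction} it rules out any $L^2$ blow-up of $\log f_\epsilon$ by forbidding invariant sub-bundles — and the only further ingredient is the routine upgrade from the resulting $L^2$ bound to a $C^0$ bound through the subsolution structure of $|\log f_\epsilon|^2$.
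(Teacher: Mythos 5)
Your proof is correct and takes essentially the same route as the paper: the $\epsilon^{-1}$ maximum-principle bound from $(\ref{keyinequality})$ gives closedness in $(0,1]$, then simpleness together with Lemma~\ref{c0contradiction} supplies the uniform $L^2$ bound as $\epsilon\to0$, which is upgraded to $C^0$ through the subsolution property of $|\log f_\epsilon|^2$ (the Moser-iteration step you spell out is exactly what the paper leaves implicit in ``from this and $(\ref{keyinequality})$'') and to all orders via Lemma~\ref{c1estimate}. Your explicit Arzel\`a--Ascoli limiting argument simply fills in convergence details the paper takes for granted.
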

\begin{proof}
From $(\ref{keyinequality})$, we see $|\log h_\epsilon|\leq\epsilon^{-1}|\nabla_K^\ast\psi_K|$, Proposition \ref{openness} and Lemma \ref{c1estimate} show $I=(0,1]$. For $\epsilon\rightarrow0$, the simpleness and Lemma \ref{c0contradiction} imply $||\log f_\epsilon||_{L^2}$ are uniformly bounded, from this and $(\ref{keyinequality})$ we know $||\log f_\epsilon||_{C^0(M)}$ must be uniformly bounded. We conclude the a priori estimates up to arbitrary order in view of Lemma \ref{c1estimate} and hence $0\in I$.
\end{proof}
\subsection{Consequences}
A triple $(E,\nabla,H)$ will be called a harmonic bundle if $H$ is harmonic. The semi-simpleness of a harmonic bundle follows from a vector bundle analogue of Gauss-Codazzi equations. In fact, for any $\nabla$-invariant sub-bundle $F$, we write $E=F\oplus F^\perp$ orthogonally and $\nabla$ takes the following form
\begin{equation}\begin{split}
\nabla=\left(\begin{array}{ccc}
\nabla_F & \beta_F \\
0 & \nabla_{F^\perp}
\end{array}\right),
\end{split}\end{equation}
where $\nabla_F$($\nabla_{F^\perp}$) is the induced connection on $F$($F^\perp$) and $\beta_F\in A^1(M,\Hom(F^\perp,F))$ is referred as the second fundamental form. It is easily known that
\begin{equation}\begin{split}
\nabla_H=\left(\begin{array}{ccc}
\nabla_{F,H} & \frac{1}{2}\beta_F \\
-\frac{1}{2}\beta_F^\ast & \nabla_{F^\perp,H}
\end{array}\right),\
\psi_H=\left(\begin{array}{ccc}
\psi_{F,H} & \frac{1}{2}\beta_F \\
\frac{1}{2}\beta_F^\ast & \psi_{F^\perp,H}
\end{array}\right),
\end{split}\end{equation}
where we decompose $\nabla_{F}=\nabla_{F,H}+\psi_{F,H}$ and
$\nabla_{F^\perp}=\nabla_{F^\perp,H}+\psi_{F^\perp,H}$.
We may choose a local normal coordinate and deduce
\begin{equation}\begin{split}
\nabla_H^\ast\psi_H
&=\sum_{i=1}^{\dim M}[\left(\begin{array}{ccc}
\psi_{F,H}(\frac{\pa}{\pa x^i}) & \frac{1}{2}\beta_F(\frac{\pa}{\pa x^i}) \\
\frac{1}{2}\beta_F^\ast(\frac{\pa}{\pa x^i}) & \psi_{F^\perp,H}(\frac{\pa}{\pa x^i})
\end{array}\right), \left(\begin{array}{ccc}
\nabla_{F,H,\frac{\pa}{\pa x^i}} & \frac{1}{2}\beta_F(\frac{\pa}{\pa x^i}) \\
-\frac{1}{2}\beta_F^\ast(\frac{\pa}{\pa x^i}) & \nabla_{F^\perp,H,\frac{\pa}{\pa x^i}}
\end{array}\right)]
\\&=\left(\begin{array}{ccc}
\nabla_{F,H}^\ast\psi_{F,H} & \frac{1}{2}\nabla_{(F^\perp)^\ast\otimes F,H}^\ast\beta_F \\
\frac{1}{2}\nabla_{F^\ast\otimes F^\perp,H}^\ast\beta_F^\ast & \nabla_{F^\perp,H}^\ast\psi_{F^\perp,H}
\end{array}\right)
\\&+\sum_{i=1}^{\dim M}\left(\begin{array}{ccc}
-\frac{1}{2}\beta_F(\frac{\pa}{\pa x^i})\circ\beta_F^\ast(\frac{\pa}{\pa x^i}) & \frac{1}{2}[\psi_{(F^\perp)^\ast\otimes F,H}(\frac{\pa}{\pa x^i}),\beta_F(\frac{\pa}{\pa x^i})]
\\
-\frac{1}{2}[\psi_{F^\ast\otimes F^\perp,H}(\frac{\pa}{\pa x^i}),\beta_F^\ast(\frac{\pa}{\pa x^i})] & \frac{1}{2}\beta_F^\ast(\frac{\pa}{\pa x^i})\circ\beta_F(\frac{\pa}{\pa x^i})
\end{array}\right),
\end{split}\end{equation}
By the harmonicity of $H$, we have
\begin{equation}\begin{split}
2\nabla_{F,H}^\ast\psi_{F,H}=\sum_{i=1}^{\dim M}\beta_F(\frac{\pa}{\pa x^i})\circ\beta_F^\ast(\frac{\pa}{\pa x^i}).
\end{split}\end{equation}
Taking trace on both sides and integrating over $M$ indicate $\beta_F=0$, therefore
\begin{equation}\begin{split}
(E,\nabla,H)=(F,\nabla_F,H)\oplus(F^\perp,\nabla_{F^\perp},H)
\end{split}\end{equation}
splits as harmonic bundles. By an induction argument we conclude that $(E,\nabla)$ is semi-simple and combing this with Proposition \ref{existence} yields
\begin{theorem}\label{semisimpleharmonic}
The existence of harmonic metrics and the semi-simpleness are equivalent.
\end{theorem}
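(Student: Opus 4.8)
The plan is to prove the equivalence by treating the two implications separately and assembling the results established above. For the direction that semi-simpleness implies the existence of harmonic metrics, I would first dispose of the simple case. Proposition \ref{existence} asserts $I=[0,1]$, so the continuity family $(\ref{continuitypath})$ is solvable at the endpoint $\epsilon=0$; the resulting metric $H_0$ then satisfies $\nabla_{H_0}^\ast\psi_{H_0}=0$ and is therefore harmonic. For a general semi-simple $(E,\nabla)$ I would write $E=\bigoplus_i E_i$ as a direct sum of simple $\nabla$-invariant sub-bundles, equip each simple summand $(E_i,\nabla_i)$ with a harmonic metric $H_i$ produced by the simple case, and form the orthogonal sum $H=\bigoplus_i H_i$. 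Because each $E_i$ is $\nabla$-invariant, the Gauss-Codazzi block decomposition of $\nabla_H^\ast\psi_H$ displayed just above the statement shows that the second fundamental forms and hence the off-diagonal blocks all vanish, so $H$ is harmonic on $E$.

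For the converse—that a harmonic bundle is semi-simple—I would invoke the computation preceding the statement. Given a harmonic metric $H$ and any nontrivial $\nabla$-invariant sub-bundle $F$, the block form of $\nabla_H^\ast\psi_H$ together with harmonicity forces $2\nabla_{F,H}^\ast\psi_{F,H}=\sum_i\beta_F(\tfrac{\pa}{\pa x^i})\circ\beta_F^\ast(\tfrac{\pa}{\pa x^i})$; taking the fibrewise trace and integrating over $M$ yields $\beta_F=0$. Consequently $F^\perp$ is also $\nabla$-invariant and $(E,\nabla,H)$ splits orthogonally as a direct sum of harmonic bundles. Inducting on $\rank(E)$ then refines this splitting into a decomposition into simple sub-bundles, so $(E,\nabla)$ is semi-simple. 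These two steps together give the stated \emph{if and only if}.

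The substantive content is concentrated entirely in the simple case of the first direction, which rests on the continuity method developed above: the openness of $I$ from Proposition \ref{openness}, the uniform $C^k$ estimates under a $C^0$-bound on $\log f_\epsilon$ from Lemma \ref{c1estimate}, and—most delicately—the closedness at the endpoint $\epsilon=0$. The hard part is ensuring that when $\|\log f_\epsilon\|_{L^2}\to\infty$ the rescaled limit $u_\infty$ genuinely produces a $\nabla$-invariant sub-bundle, contradicting simpleness; this is precisely the Simpson-type spectral-projection argument secured by Lemma \ref{c0contradiction}. Granting Proposition \ref{existence}, the theorem follows by the assembly described above.
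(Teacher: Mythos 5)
Your proposal is correct and follows essentially the same route as the paper: the continuity method (Propositions \ref{openness}, \ref{existence} together with Lemmas \ref{c1estimate}, \ref{c0contradiction}) supplies a harmonic metric in the simple case, and the Gauss--Codazzi block computation of $\nabla_H^\ast\psi_H$ with the trace-and-integrate step forces $\beta_F=0$, giving semi-simpleness of harmonic bundles by induction. The only difference is that you spell out the direct-sum assembly $H=\bigoplus_i H_i$ for a general semi-simple $(E,\nabla)$, which the paper leaves implicit in the phrase ``combining this with Proposition \ref{existence}''; your observation that invariance of every summand makes $\nabla$ block diagonal, so the off-diagonal terms vanish automatically, is exactly the needed justification.
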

\begin{corollary}\label{correspondence}
Given a Hermitian vector bundle $(E,K)$, we have a natural bijection:
\begin{equation}\begin{split}
I_K:\mathcal{M}_S\rightarrow\mathcal{M}_{I,K}.
\end{split}\end{equation}
\end{corollary}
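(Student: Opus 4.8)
The plan is to build $I_K$ explicitly from Theorem \ref{semisimpleharmonic} together with the gauge--covariance of the equation $\nabla_K^\ast\psi_K=0$ recorded in Section 2.1, the whole argument resting on a single uniqueness statement for harmonic metrics on simple bundles. Before anything else I would reconcile the two adjectives appearing in the statement. Over a connected base a $\nabla$--invariant (i.e. $\nabla$--parallel) endomorphism has constant eigenvalues, so for complex $E$ the eigenspace $\ker(g-\lambda\,\id)$ of a nonscalar invariant $g$ is a nontrivial $\nabla$--invariant sub-bundle; hence a simple $\nabla$ admits no nonscalar invariant endomorphism, i.e. simple implies irreducible. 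Conversely, an irreducible $\nabla$ with $\nabla_K^\ast\psi_K=0$ is semi-simple by the Gauss--Codazzi computation preceding Theorem \ref{semisimpleharmonic}, and a semi-simple bundle whose only invariant endomorphisms are scalars cannot split into two or more pieces, so it is simple. Thus on the objects that actually occur the two conditions coincide.

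For the construction, let $\nabla$ be simple. By Theorem \ref{semisimpleharmonic} it carries a harmonic metric $H$; writing $H(\bullet,\bullet)=K(h\bullet,h\bullet)$ with $h\in\Aut(E)$, the identity $h(\nabla)^\ast_K\psi_{h(\nabla),K}=h(\nabla_H^\ast\psi_{H})$ established in Section 2.1 shows that $\tilde\nabla:=h(\nabla)$ satisfies $h(\nabla)^\ast_K\psi_{h(\nabla),K}=0$. Since $\tilde\nabla$ is gauge--equivalent to $\nabla$ it is again simple, hence irreducible, so $[\tilde\nabla]\in\mathcal{M}_{I,K}$, and I set $I_K([\nabla])=[\tilde\nabla]$.

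The crux is well--definedness, which reduces to uniqueness of the harmonic metric up to a positive scalar. Given two harmonic metrics $H_1,H_2$ on the simple bundle $(E,\nabla)$, put $f=H_1^{-1}H_2$, self-adjoint and positive with respect to $H_1$. Specializing \eqref{DHKstarpsiHK} with $K=H_1$ and using $\nabla_{H_1}^\ast\psi_{H_1}=\nabla_{H_2}^\ast\psi_{H_2}=0$ gives $\tr_g\nabla(f^{-1}\delta_{H_1}f)=0$, whence the computation behind Lemma \ref{c1estimate} yields $\Delta_g\tr f=|f^{-\frac12}\delta_{H_1}f|^2\geq0$. On compact $M$ the maximum principle forces $\delta_{H_1}f=0$, so $f$ is parallel for the adjoint connection $\delta_{H_1}=\nabla^{\ast H_1}$. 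As invariant endomorphisms of $\nabla$ and of $\nabla^{\ast H_1}$ correspond under adjunction, irreducibility of $\nabla$ makes $\nabla^{\ast H_1}$ irreducible, so $f=c\,\id$ with $c>0$ constant and $H_2=cH_1$. With this I would dispatch the remaining ambiguities: rescaling $H\mapsto cH$ rescales $h\mapsto\sqrt{c}\,h$, which is central and leaves $\tilde\nabla$ fixed; the residual freedom in $h$ for fixed $H$ is exactly left multiplication $h\mapsto uh$ by $u\in\Unitary(E,K)$, replacing $\tilde\nabla$ by $u(\tilde\nabla)$; and if $\nabla_2=\sigma(\nabla_1)$ with $\sigma\in\Aut(E)$, then $\sigma_\ast H_1$ is harmonic for $\nabla_2$ and a short computation gives $\tilde\nabla_2=u_0(\tilde\nabla_1)$ for some $u_0\in\Unitary(E,K)$. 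Each of these preserves the class in $\mathcal{M}_{I,K}$, so $I_K$ is unambiguous.

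Injectivity and surjectivity are then formal. If $I_K([\nabla_1])=I_K([\nabla_2])$, i.e. $\tilde\nabla_2=u(\tilde\nabla_1)$ with $u\in\Unitary(E,K)$, then from $\tilde\nabla_i=h_i(\nabla_i)$ one gets $\nabla_2=(h_2^{-1}uh_1)(\nabla_1)$, so $[\nabla_1]=[\nabla_2]$ in $\mathcal{M}_S$. For surjectivity, any representative $\nabla$ of a class in $\mathcal{M}_{I,K}$ already has $K$ as a harmonic metric, hence is simple by the first paragraph, lies in $\mathcal{M}_S$, and satisfies $I_K([\nabla])=[\nabla]$ (take $h=\id$). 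The main obstacle, and the only nonformal input, is the uniqueness lemma of the third paragraph; once it is in place the rest is bookkeeping with the gauge action.
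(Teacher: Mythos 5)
Your proposal is correct and follows essentially the same route as the paper: simple implies irreducible via the parallel-endomorphism eigenbundle argument, the map is built from the harmonic metric of Theorem \ref{semisimpleharmonic} through the gauge identity $h(\nabla)^\ast_K\psi_{h(\nabla),K}=h(\nabla_H^\ast\psi_H)$, and well-definedness rests on the identical uniqueness computation $\Delta_g\tr f=|f^{-\frac{1}{2}}\delta_{H_1}f|^2\geq 0$ followed by the maximum principle and irreducibility. The only difference is that you make surjectivity explicit (irreducible together with $\nabla_K^\ast\psi_K=0$ yields simpleness via the Gauss--Codazzi splitting preceding Theorem \ref{semisimpleharmonic}), a point the paper's proof leaves implicit, which is a refinement rather than a deviation.
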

\begin{proof}
Given a simple connection $\nabla$ with $\nabla f=0$ for $f\in\Gamma(\End(E))$. Let
\begin{equation}\begin{split}
g(\lambda)=(\lambda-\lambda_1)^{m_1}...(\lambda-\lambda_s)^{m_s}
\end{split}\end{equation}
be the characteristic polynomial of $f$, where $\lambda_j$ are the $s$ mutually different eigenvalues of $f$. It is known that $\tr f^k$ must be constants since $d\tr f^k=\tr\nabla f^k=0$. In particular, we conclude $\lambda_j$, $m_j$ are constants and we decompose $E$ into the eigenspaces of $f$:
\begin{equation}\begin{split}
(E,\nabla)=\bigoplus_{i=1}^s(E_{\lambda_i},\nabla|_{E_{\lambda_i}}).
\end{split}\end{equation}
If $s\geq2$ one finds at least one nontrivial $\nabla$-invariant sub-bundle, contradicting to the simpleness. Therefore $f$ only has one eigenvalue, saying $\lambda$. However the simpleness again implies $\ker(f-\lambda\id_E)=E$, $f=\lambda\id_E$ and hence $\nabla$ is irreducible. Furthermore, there is a connection in the $\Aut(E)$-orbit through $\nabla$ such that $K$ is harmonic. In fact, we pick $H$, the harmonic metric given in Theorem \ref{semisimpleharmonic} and it follows for $K^{-1}H=h^{\ast K}h$,
\begin{equation}\begin{split}
h(\nabla)_K^\ast\psi_{h(\nabla),K}=h(\nabla_H^\ast\psi_H)=0.
\end{split}\end{equation}
If $K^{-1}H=\tilde{h}^{\ast K}\tilde{h}$, $\tilde{h}(\nabla)$ is isomorphic to $h(\nabla)$ via $\tilde{h}\circ h^{-1}\in\Unitary(E,K)$. Moreover, let $H_1$, $H_2$ be two harmonic metrics and $h_{12}=H_1^{-1}H_2$, we have
\begin{equation}\begin{split}
\Delta_g\tr h_{12}
=(h_{12}^{-1}\delta_{H_1}h_{12},\delta_{H_1}h_{12})_{H_1}=|h_{12}^{-\frac{1}{2}}\delta_{H_1}h_{12}|_{H_1}^2,
\end{split}\end{equation}
from which and the simpleness we know that $h_{12}$ is a constant mutiple of $\id_E$. Consequently, $\nabla$ uniquely corresponds to an element, saying $I_K(\nabla)=[h(\nabla)]\in\mathcal{M}_{I,K}$. In addition, assuming $I_K(\nabla)=[h_1(\nabla)]$ and $I_K(g(\nabla))=[h_2(g(\nabla))]$, it is not hard to see
\begin{equation}\begin{split}
h_2(g(\nabla))=(h_2\circ g\circ h_1^{-1})(h_1(\nabla)),\
h_2\circ g\circ h_1^{-1}\in\Unitary(E,K).
\end{split}\end{equation}
That is, $I_K$ desends to a bijection from $\mathcal{M}_S$ to $\mathcal{M}_{I,K}$, as required.
\end{proof}
\section{An application in Sasakian geometry}
\subsection{Sasakian manifolds in pseudo-Hermitian geometry}
We start with some basic facts on Sasakian manifolds from the viewpoint of pseudo-Hermitian geometry, more details can be found in \cite{BG2008,DT2006}.  Let $M$ be a $2n+1$-dimensional smooth manifold, a CR structure on $M$ is an integrable rank $n$ complex sub-bundle
\begin{equation}\begin{split}
T_{1,0}M\subseteq T^{\mathbb{C}}M=TM\otimes\mathbb{C}
\end{split}\end{equation}
satisfying $T_{1,0}M\cap T_{0,1}M=\{0\}$ for $T_{0,1}M=\ol{T_{1,0}M}$. We then call $(M,T_{1,0}M)$ a CR manifold and its maximal complex or Levi, distribution is the real rank $2n$ real sub-bundle
\begin{equation}
HM=Re\{T_{1,0}M\oplus T_{0,1}M\}\subseteq TM.
\end{equation}
It carries a complex structure $J:HM\rightarrow HM$ given by
\begin{equation}
J(X+\ol{X})=\sqrt{-1}(X-\ol{X}),\ X\in T_{1,0}M.
\end{equation}
Assume $M$ to be orientable, we define
\begin{equation}
E_x=\{\omega\in T^\ast_xM,HM_x\subseteq\ker\omega\}\subseteq T_x^\ast M,
\end{equation}
then $E$ a real line bundle and any globally defined nowhere vanishing section $\eta$ is called a pseudo-Hermitian structure and the associated Levi form $L_\eta$ is defined by
\begin{equation}\begin{split}
L_\eta(X,\ol{Y})=-\sqrt{-1}d\eta(X,\ol{Y}),\ X,Y\in T_{1,0}M.
\end{split}\end{equation}
An orientable CR manifold $(M,T_{1,0}M)$ is nondegenerate if $L_{\eta}$ is nondegenerate for a pseudo-Hermitian structure $\eta$ and $(M,T_{1,0}M,\eta)$ is called a pseudo-Hermitian manifold. Moreover, $(M,T_{1,0}M,\eta)$ is said to be strictly pseudoconvex CR manifold if $L_\eta$ is positive definite.
\par For a pseudo-Hermitian manifold $(M,T_{1,0}M,\eta)$, $d\eta$ is nondegenerate on $HM$ and thus there is a unique nonvanishing vector field $\xi$(referred to as the characteristic direction) such that
\begin{equation}\begin{split}
\eta(\xi)=1,\ d\eta(\xi,\bullet)=0,
\end{split}\end{equation}
and $\xi$ is transverse to the Levi distribution(that is $TM=HM\oplus\mathbb{R}\xi$). Define a bilinear form
\begin{equation}\begin{split}
G_{\eta}(X,Y)=d\eta(X,JY),\ X,Y\in HM,
\end{split}\end{equation}
the integrability of $T_{1,0}M$ implies that $G_{\eta}$ is $J$-invariant and thus symmetric. One may extend $G_\eta$ to a semi-Riemannian(Riemannian if strictly pseudoconvex CR) metric via
\begin{equation}\begin{split}
g_\eta(X,Y)=G_\eta(X,Y),\ g_\eta(X,\xi)=0,\ g_\eta(\xi,\xi)=1,X,Y\in HM,
\end{split}\end{equation}
and we call it the Webster metric.
\begin{proposition}[\cite{Ta1975,We1978}, see also \cite{DT2006}]\label{TW}
For a pseudo-Hermitian manifold $(M,T_{1,0}M,\eta)$, we extend $J$ to an endomorphism of $TM$ by requiring $J\xi=0$. There exists a unique affine connection $\nabla^{TM}$(called Tanaka-Webster connection) on $TM$ such that
\begin{enumerate}
\item $HM$ is parallel with respect to $\nabla^{TM}$,
\item $\nabla^{TM}g_\eta=0$, $\nabla^{TM}J$=0 and $\nabla^{TM}\xi=0$,
\item The torsion $T_{\nabla^{TM}}$ is pure:
\begin{enumerate}
\item $\tau_{\nabla^{TM}}\circ J+J\circ\tau_{\nabla^{TM}}=0$,
\item $T_{\nabla^{TM}}(X,Y)=0$, $T_{\nabla^{TM}}(X,\ol{Y})=d\eta(X,\ol{Y})\xi$,
\end{enumerate}
where $\tau_{\nabla^{TM}}=T_{\nabla^{TM}}(\xi,\bullet)$(called the pseudo-Hermitian torsion) and $X,Y\in T_{1,0}M$.
\end{enumerate}
\end{proposition}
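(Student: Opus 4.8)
The plan is to prove uniqueness and existence separately, in both cases through a Koszul-type computation adapted to the splitting $TM=HM\oplus\mathbb{R}\xi$. The only genuine freedom left by condition (3) lies in the pseudo-Hermitian torsion $\tau_{\nabla^{TM}}=T_{\nabla^{TM}}(\xi,\cdot)$, which 3(a) constrains merely to anti-commute with $J$; the first key observation I would make is that conditions (1)--(2) force this tensor to be completely determined, after which the connection becomes rigid.

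For uniqueness, I would suppose $\nabla$ (writing $\nabla$ for $\nabla^{TM}$) satisfies all the stated properties. Since $\nabla\xi=0$ we have $\nabla_U\xi=0$ for every $U$, so $\nabla$ is the trivial connection on $\mathbb{R}\xi$ and it remains only to pin down $\nabla_U V$ for $V\in\Gamma(HM)$. For $X\in\Gamma(HM)$ the structure equations $\eta(\xi)=1$ and $d\eta(\xi,\cdot)=0$ give $\eta([\xi,X])=-d\eta(\xi,X)=0$, so $[\xi,X]\in\Gamma(HM)$ and $\tau(X)=T_{\nabla^{TM}}(\xi,X)=\nabla_\xi X-[\xi,X]$. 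Imposing $\nabla_\xi J=0$ together with $\tau\circ J+J\circ\tau=0$ then yields
\[
2J\tau(X)=[\xi,JX]-J[\xi,X]=(\mathcal{L}_\xi J)(X),
\]
so that $\tau=-\tfrac12 J\circ(\mathcal{L}_\xi J)$ is fixed and $\nabla_\xi X=\tau(X)+[\xi,X]$ is determined. With the torsion now entirely known --- zero on $T_{1,0}M\wedge T_{1,0}M$, equal to $d\eta(X,\overline{Y})\xi$ on mixed pairs, and equal to $\tau$ in the $\xi$-direction --- the standard cyclic combination of $\nabla g_\eta=0$ produces a Koszul identity $2g_\eta(\nabla_X Y,Z)=\cdots$ expressing $\nabla_X Y$ for $X,Y\in\Gamma(HM)$ through $g_\eta$, Lie brackets and the torsion. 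Nondegeneracy of $g_\eta$ then determines $\nabla_X Y$, giving at most one such connection.

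For existence, I would simply declare $\nabla$ by these formulas: $\nabla_U\xi=0$, $\nabla_\xi X=-\tfrac12 J(\mathcal{L}_\xi J)(X)+[\xi,X]$, and $\nabla_X Y$ via the Koszul identity above, and then verify each axiom. Tensoriality of the right-hand sides in the relevant slots and the fact that they land in $HM$ give (1). Metric compatibility $\nabla g_\eta=0$ in the $HM$-directions is built into the Koszul formula, while along $\xi$ it reduces to the symmetry of $\tau$ and the identity $(\mathcal{L}_\xi g_\eta)|_{HM}=2g_\eta(\tau\cdot,\cdot)$. The conditions $\nabla J=0$ and the purity 3(a)--3(b) (vanishing on $T_{1,0}M\wedge T_{1,0}M$) I would check componentwise from the defining formulas, using the CR integrability $[\,\Gamma(T_{1,0}M),\Gamma(T_{1,0}M)\,]\subseteq\Gamma(T_{1,0}M)$; the anti-commutation $\tau\circ J+J\circ\tau=0$ in particular follows from $\mathcal{L}_\xi(J^2)=0$, which rests on $\mathcal{L}_\xi\eta=d(\iota_\xi\eta)+\iota_\xi d\eta=0$.

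The hard part will be precisely this existence verification: confirming that the single connection forced by the uniqueness argument genuinely satisfies $\nabla J=0$ and has pure torsion. These are the steps where the integrability of $T_{1,0}M$ and the compatibility identities among $\eta$, $d\eta$, $J$ and $\xi$ must enter essentially. By contrast, the uniqueness half and the metric-compatibility bookkeeping are formal once $\tau$ has been identified, so I expect no obstruction there.
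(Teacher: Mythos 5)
First, a point of order: the paper contains no proof of this proposition at all --- it is quoted from Tanaka and Webster (see also Dragomir--Tomassini, where it appears with a full proof), so there is no internal argument to compare yours against; the comparison target is the classical one, and your outline follows it in spirit. Judged on its own terms, your uniqueness half is essentially complete and the computations you display are correct: $\eta([\xi,X])=-d\eta(\xi,X)=0$ does place $[\xi,X]$ in $HM$; combining $\nabla_\xi J=0$ with $\tau\circ J+J\circ\tau=0$ does yield $2J\tau(X)=[\xi,JX]-J[\xi,X]=(\mathcal{L}_\xi J)(X)$, hence $\tau=-\tfrac12 J\circ(\mathcal{L}_\xi J)$ on $HM$ (you should say a word about why $(\mathcal{L}_\xi J)(X)$ and $\tau(X)$ lie in $HM$ --- again via $\mathcal{L}_\xi\eta=0$ --- before applying $J^2=-\id$); and since 3(b) together with CR integrability (which forces $d\eta(X,Y)=0$ for $X,Y\in T_{1,0}M$) pins the torsion on $HM\wedge HM$ down to $d\eta(\cdot,\cdot)\otimes\xi$, the torsion-modified Koszul identity determines $g_\eta(\nabla_XY,Z)$ for $Z\in HM$, while parallelism of $HM$ kills the $\xi$-component. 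Your identity $(\mathcal{L}_\xi g_\eta)|_{HM}=2g_\eta(\tau\,\cdot,\cdot)$ is also correct and follows from $\mathcal{L}_\xi d\eta=d\mathcal{L}_\xi\eta=0$; it simultaneously gives the symmetry of $\tau$ that metric compatibility in the $\xi$-direction requires.

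The existence half, however, remains a plan rather than a proof, and the incompleteness sits exactly where you flag it. A cleaner packaging of your construction: for any metric and any prescribed torsion tensor there is a unique compatible metric connection (Levi-Civita plus contorsion), so prescribing $T=d\eta(\cdot,\cdot)\otimes\xi$ on $HM\wedge HM$ and $T(\xi,\cdot)=\tau$ with your $\tau$ makes conditions (2) for $g_\eta$, 3(a) (via $\mathcal{L}_\xi(J^2)=0$, as you say), 3(b) (via integrability), and --- after a short Koszul computation using the displayed identity for $\mathcal{L}_\xi g_\eta$ --- also $\nabla^{TM}\xi=0$ and hence (1), come out automatically. What is never actually verified is $\nabla^{TM}J=0$, equivalently that the connection so defined preserves $T_{1,0}M$; this is the one axiom that cannot be absorbed into the torsion prescription, it is the only place the integrability of $T_{1,0}M$ enters essentially (the statement is false for non-integrable almost CR structures), and it is where the real content of Tanaka's and Webster's theorem lies. ``I would check componentwise using CR integrability'' names the step without performing it, so as a blind proof the attempt is correct in strategy and in its uniqueness computations, but genuinely incomplete at that final verification.
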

As an endomorphism, it is known that 
\begin{equation}\begin{split}
\tr\tau_{\nabla^{TM}}=0,\ \tau_{\nabla^{TM}}(T_{1,0}M)\subseteq T_{0,1}M.
\end{split}\end{equation}
On the other hand, the relation between the Levi-Civita connection $\nabla^{g_\eta}$ and the Tanaka-Webster connection $\nabla^{TM}$ is given by(see \cite{DT2006})
\begin{equation}\begin{split}\label{Riemanniantanakedifference}
\nabla^{g_\eta}_XY-\nabla^{TM}_XY&=\left(g_\eta(X,J(Y))-g_\eta(\tau_{\nabla^{TM}}(X),Y)\right)\xi
\\&+\tau_{\nabla^{TM}}(X)\eta(Y)+\eta(X)J(Y)+\eta(Y)J(X),
\end{split}\end{equation}
for two vector fields $X,Y$. Define a vector field $U=\tr_{g_\eta}(\nabla^{g_\eta}-\nabla^{TM})$ and it follows
\begin{lemma}\label{U0}
For a strictly pseudoconvex CR manifold $(M,T_{1,0}M,\eta)$, we have $U=0$.
\end{lemma}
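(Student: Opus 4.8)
The plan is to compute $U$ directly from the pointwise difference formula $(\ref{Riemanniantanakedifference})$ by tracing against a $g_\eta$-orthonormal frame adapted to the splitting $TM=HM\oplus\mathbb{R}\xi$. Concretely, I would work in a local orthonormal frame $\{e_1,\dots,e_{2n},e_{2n+1}=\xi\}$ with $e_1,\dots,e_{2n}\in HM$, and write $U=\sum_{a=1}^{2n+1}(\nabla^{g_\eta}_{e_a}-\nabla^{TM}_{e_a})e_a$, substituting $X=Y=e_a$ into $(\ref{Riemanniantanakedifference})$ and summing over $a$. The whole computation then splits naturally into the horizontal contributions ($e_a\in HM$) and the single vertical contribution ($e_a=\xi$).

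First I would treat the horizontal part of the frame. For $e_a\in HM$ one has $\eta(e_a)=0$, so the three terms $\tau_{\nabla^{TM}}(X)\eta(Y)$, $\eta(X)J(Y)$ and $\eta(Y)J(X)$ in $(\ref{Riemanniantanakedifference})$ all drop out, leaving only $\bigl(g_\eta(e_a,Je_a)-g_\eta(\tau_{\nabla^{TM}}(e_a),e_a)\bigr)\xi$. The key observations are then that $J$ is skew-symmetric with respect to $g_\eta$ on $HM$, which follows from the $J$-invariance of $G_\eta$ and forces each $g_\eta(e_a,Je_a)=0$; and that $\sum_a g_\eta(\tau_{\nabla^{TM}}(e_a),e_a)$ is precisely the trace of $\tau_{\nabla^{TM}}$ viewed as an endomorphism. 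Since $\tau_{\nabla^{TM}}(\xi)=T_{\nabla^{TM}}(\xi,\xi)=0$ and $\tau_{\nabla^{TM}}$ preserves $HM$ (because $\tau_{\nabla^{TM}}(T_{1,0}M)\subseteq T_{0,1}M$), this trace equals $\tr\tau_{\nabla^{TM}}=0$. Hence the horizontal frame contributes nothing to $U$.

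Next I would check the vertical direction $e_{2n+1}=\xi$. Here $J\xi=0$ by the extension convention in Proposition \ref{TW}, and $\tau_{\nabla^{TM}}(\xi)=0$, so the first term of $(\ref{Riemanniantanakedifference})$ vanishes, while the remaining terms collapse to $\tau_{\nabla^{TM}}(\xi)\eta(\xi)+2\eta(\xi)J\xi=0$. Thus $\xi$ also contributes nothing, and summing gives $U=0$. I do not expect a genuine obstacle: the statement reduces to two facts already at hand, the skew-symmetry of $J$ for $g_\eta$ and the vanishing $\tr\tau_{\nabla^{TM}}=0$, together with $J\xi=\tau_{\nabla^{TM}}(\xi)=0$. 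The only point requiring mild care is bookkeeping the $\eta$-factors so that exactly the Levi-form term and the torsion trace survive along $HM$, and then confirming each of these vanishes separately.
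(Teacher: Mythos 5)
Your proof is correct and is exactly the computation the paper leaves implicit: Lemma \ref{U0} is stated as a direct consequence of the difference formula $(\ref{Riemanniantanakedifference})$, obtained by tracing over an adapted orthonormal frame and using the skew-symmetry of $J$ for $g_\eta$, $\tr\tau_{\nabla^{TM}}=0$ (with $\tau_{\nabla^{TM}}$ preserving $HM$ and killing $\xi$), and $J\xi=\tau_{\nabla^{TM}}(\xi)=0$. Your bookkeeping of the horizontal and vertical contributions matches this intended argument, including the point that only $\tr\tau_{\nabla^{TM}}=0$ (not vanishing torsion) is needed, so the lemma indeed holds for all strictly pseudoconvex CR manifolds.
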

\begin{defn}
A strictly pseudoconvex CR manifold $(M,T_{1,0}M,\eta)$ with vanishing pseudo-Hermitian torsion $\tau_{\nabla^{TM}}$ is called a Sasakian manifold.
\end{defn}
\subsection{A vanishing result}
In what follows we always assume $\{e_1=\xi,e_2,...,e_{2n+1}\}$ is a local frame of $TM$ and $\{e^1=\eta,e^2,...,e^{2n+1}\}$ is a local frame of $T^\ast M$. For a vector bundle $E$ over $M$ with a connection $\nabla$ and a metric $K$ on $E$, we shall use $\nabla_K$ to denote the connection on $\Lambda^\bullet T^\ast M\otimes E$ which acts on $\Lambda^\bullet T^\ast M$ by $\nabla^{TM}$ and acts on $E$ by $\nabla_K$. 
We take the notation
\begin{equation}\begin{split}
\nabla_{H,X,Y}^2\omega=\nabla_{H,X}\nabla_{H,Y}\omega-\nabla_{H,\nabla^{TM}_XY}\omega,
\end{split}\end{equation}
for two vector fields $X,Y$ and $\omega\in A^\bullet(E)$.
\par A connection $\nabla$ on a complex vector bundle $E$ is called projective flat if the curvature satisfies $\sqrt{-1}F_\nabla=\alpha\otimes\id_E$ for a $2$-form $\alpha$.
\begin{lemma}\label{basicharmonic1}
Let $(E,\nabla)$ be a projective flat complex vector bundle over a compact Sasakian manifold $(M,T_{1,0}M,\eta)$ and $H$ be a metric on $E$ with $\nabla_{H,\xi}\nabla_H^\ast\psi_H=0$, then $\nabla_H(\psi_H(\xi))=0$.
\end{lemma}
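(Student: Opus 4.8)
The plan is to show that the self-adjoint section $s:=\psi_H(\xi)\in\Gamma(\End E)$ is $\nabla_H$-parallel by producing a Bochner-type identity for $\nabla_H^\ast\nabla_H s$ and feeding it into a maximum principle on the compact $M$. I would first record the structural consequences of projective flatness. Taking $\sqrt{-1}F_\nabla=\alpha\otimes\id_E$ with $\alpha$ a real $2$-form, the curvature $F_\nabla$ is anti-self-adjoint with respect to any metric, $F_\nabla^{\ast H}=-F_\nabla$. Inserting this into the two relations established in the proof of Lemma \ref{c1estimate},
\begin{equation*}
D_H\psi_H=\tfrac12(F_\nabla+F_\nabla^{\ast H}),\qquad F_{\nabla_H}+\psi_H\wedge\psi_H=\tfrac12(F_\nabla-F_\nabla^{\ast H}),
\end{equation*}
yields at once $D_H\psi_H=0$ and $F_{\nabla_H}=-\sqrt{-1}\alpha\,\id_E-\psi_H\wedge\psi_H$; the central term $-\sqrt{-1}\alpha\,\id_E$ will be annihilated by every commutator below.

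Next I would extract a first-order identity. Evaluating $D_H\psi_H=0$ on the pair $(\xi,X)$ and invoking the torsion structure of Proposition \ref{TW} on a Sasakian manifold, where $T_{\nabla^{TM}}(\xi,X)=\tau_{\nabla^{TM}}(X)=0$ and $\nabla^{TM}\xi=0$, gives $(\nabla_{H,\xi}\psi_H)(X)=(\nabla_{H,X}\psi_H)(\xi)=\nabla_{H,X}s$ for every vector field $X$. That is, $\nabla_H s=\nabla_{H,\xi}\psi_H$ as $\End E$-valued $1$-forms, which is precisely what converts the hypothesis about $\nabla_H^\ast\psi_H$ into information about $s$.

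Using this, $\nabla_H^\ast\nabla_H s=\nabla_H^\ast(\nabla_{H,\xi}\psi_H)$. I would expand both $\nabla_H^\ast(\nabla_{H,\xi}\psi_H)$ and $\nabla_{H,\xi}(\nabla_H^\ast\psi_H)$ through second covariant derivatives in a Tanaka-Webster normal frame $\{e_1=\xi,e_2,\dots,e_{2n+1}\}$; this is legitimate because Lemma \ref{U0} ($U=0$) makes the trace defining $\nabla_H^\ast$ insensitive to the difference between $\nabla^{g_\eta}$ and $\nabla^{TM}$. Their difference is the curvature commutator $-\sum_i[(\nabla_{H,e_i,\xi}^2-\nabla_{H,\xi,e_i}^2)\psi_H](e_i)$, and since $T_{\nabla^{TM}}(e_i,\xi)=0$ this equals $-\sum_i[R_{\nabla_H}(e_i,\xi)\cdot\psi_H](e_i)$, whose $\End E$-part contributes $[F_{\nabla_H}(e_i,\xi),\psi_H(e_i)]$ and whose form-part contributes $-\psi_H(R^{TM}(e_i,\xi)e_i)$, where $R^{TM}$ is the Tanaka-Webster curvature. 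The hypothesis $\nabla_{H,\xi}\nabla_H^\ast\psi_H=0$ kills $\nabla_{H,\xi}(\nabla_H^\ast\psi_H)$, so
\begin{equation*}
\nabla_H^\ast\nabla_H s=-\sum_i[F_{\nabla_H}(e_i,\xi),\psi_H(e_i)]+\sum_i\psi_H(R^{TM}(e_i,\xi)e_i).
\end{equation*}
Substituting $F_{\nabla_H}=-\sqrt{-1}\alpha\,\id_E-\psi_H\wedge\psi_H$ and $(\psi_H\wedge\psi_H)(e_i,\xi)=[\psi_H(e_i),s]$ collapses the first sum to $\sum_i[[\psi_H(e_i),s],\psi_H(e_i)]$, while the second sum drops by the Sasakian identity $R^{TM}(\cdot,\xi)=0$ (equivalently, vanishing of the pseudo-Hermitian torsion and its derivative). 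Pairing with $s$ and using that each $[\psi_H(e_i),s]$ is anti-self-adjoint gives $(\nabla_H^\ast\nabla_H s,s)_H=-\sum_i|[\psi_H(e_i),s]|_H^2\le0$, whence $\Delta_g|s|^2_H=2|\nabla_H s|^2_H+2\sum_i|[\psi_H(e_i),s]|^2_H\ge0$. Integrating $\Delta_g|s|_H^2$ over the compact $M$ forces the nonnegative integrand to vanish, so $\nabla_H s=0$, i.e.\ $\nabla_H(\psi_H(\xi))=0$.

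The main obstacle I anticipate is the commutation in the third step: isolating the Tanaka-Webster curvature contribution $\sum_i\psi_H(R^{TM}(e_i,\xi)e_i)$ with the correct signs and justifying its vanishing on a Sasakian manifold, together with the bookkeeping needed to reconcile the Levi-Civita adjoint $\nabla_H^\ast$ with a computation carried out in a Tanaka-Webster frame, which is exactly where $U=0$ enters. If one prefers to avoid the curvature theory of $\nabla^{TM}$ entirely, an alternative is to run the analogous computation with the projectively flat $\nabla$ in place of $\nabla_H$ and transfer the conclusion, at the cost of carrying additional $\psi_H$-terms.
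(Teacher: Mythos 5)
Your proof is correct and follows the same overall strategy as the paper's: the symmetry identity $\nabla_H(\psi_H(\xi))=\nabla_{H,\xi}\psi_H$ extracted from $D_H\psi_H=0$ together with $\tau_{\nabla^{TM}}=0$ and $\nabla^{TM}\xi=0$, Lemma \ref{U0} to reconcile the Levi-Civita adjoint with Tanaka-Webster traces, the projective-flatness substitution $F_{\nabla_H}=-\sqrt{-1}\alpha\otimes\id_E-\psi_H\wedge\psi_H$ so that only the double bracket survives, and a Bochner inequality forcing $\nabla_H(\psi_H(\xi))=0$ on the compact $M$ (your integration of $\Delta_{g_\eta}|s|_H^2$ over $M$ is equivalent to the paper's pointwise maximum principle, and your factor-of-two discrepancies are just wedge conventions). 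The one genuine divergence is where you apply the Ricci identity: you commute $\nabla_H^\ast$ past $\nabla_{H,\xi}$ acting on the \emph{$1$-form} $\psi_H$, so the form index produces the Tanaka-Webster curvature term $\sum_i\psi_H(R^{TM}(e_i,\xi)e_i)$, which you then kill via $R^{TM}(\cdot,\xi)=0$. That identity is indeed valid on Sasakian manifolds — it follows from the first Bianchi identity with torsion, since $\tau_{\nabla^{TM}}=0$ and $\nabla^{TM}T_{\nabla^{TM}}=0$ give that $g_\eta(R^{TM}(\xi,X)Y,Z)$ is symmetric in $X,Y$ and skew in $Y,Z$, hence zero — but it is precisely the piece of Tanaka-Webster curvature theory the paper advertises avoiding: the paper's proof first uses the symmetry identity to rewrite $\tr_{g_\eta}\nabla_H^2(\psi_H(\xi))$ in terms of second derivatives of the $\End(E)$-sections $\psi_H(e_j)$, so that commuting derivatives only produces $[F_{\nabla_H}(e_i,\xi),\psi_H(e_j)]$ plus a $\nabla_{H,\tau_{\nabla^{TM}}(e_i)}$ term, and no $R^{TM}$ enters at any stage. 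So your route buys a more standard-looking Weitzenb\"{o}ck computation at the cost of one extra (classical but nontrivial) curvature fact; if you keep it, you should prove $\iota_\xi R^{TM}=0$ as above rather than merely assert it, whereas reordering as the paper does makes the argument self-contained.
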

\begin{proof}
We first note Lemma \ref{U0} implies
\begin{equation}\begin{split}\label{new2}
\nabla_H^\ast\psi_H
&=g_\eta^{ij}\nabla_{H,e_i}(\psi_H(e_j))-g_\eta^{ij}\psi_H(\nabla^{g}_{e_i}e_j)
\\&=g_\eta^{ij}\nabla_{H,e_i}(\psi_H(e_j))-g_\eta^{ij}\psi_H(\nabla^{TM}_{e_i}e_j)
\end{split}\end{equation}
The projective flatness means $D_H\psi_H=0,\ \sqrt{-1}(F_{\nabla_H}+\psi_H\wedge\psi_H)=\alpha\otimes\id_E$ for $\alpha\in A^2(M)$. Next we deduce the symmetry
\begin{equation}\begin{split}\label{new1}
\nabla_{H,e_i}(\psi_H(\xi))
&=\nabla_{H,e_i}\psi_H(\xi)+\psi_H(\nabla^{TM}_{e_i}\xi)
\\&=\nabla_{H,\xi}\psi_H(e_i)-\psi_H(\tau_{\nabla^{TM}}(e_i))-D_H\psi_H(\xi,e_i)
\\&=\nabla_{H,\xi}(\psi_H(e_i)),
\end{split}\end{equation}
where we haved used $\nabla^{TM}\xi=0$ and the vanishing of pseudo-Hermitian torsion. Using these properties again, $(\ref{new2})$ and $(\ref{new1})$ repeatedly, we have
\begin{equation}\begin{split}
\tr_{g_\eta}\nabla_H^2(\psi_H(\xi))
&=g_{\eta}^{ij}\nabla_{H,e_i}\nabla_{H,e_j}(\psi_H(\xi))
-g_{\eta}^{ij}\nabla_{H,\nabla^{TM}_{e_i}e_j}(\psi_H(\xi))
\\&=g_{\eta}^{ij}\nabla^2_{H,e_i,\xi}(\psi_H(e_j))
-g_{\eta}^{ij}\nabla_{H,\nabla^{TM}_{e_i}e_j}(\psi_H(\xi))
\\&=g_{\eta}^{ij}\nabla^2_{H,\xi,e_i}(\psi_H(e_j))
-g_{\eta}^{ij}\nabla_{H,\nabla^{TM}_{e_i}e_j}(\psi_H(\xi))
\\&+g_{\eta}^{ij}\left([F_{\nabla_H}(e_i,\xi),\psi_H(e_j)]
+\nabla_{H,\tau_{\nabla^{TM}}(e_i)}(\psi_H(e_j))\right)
\\&=g_{\eta}^{ij}\nabla^2_{H,\xi,e_i}(\psi_H(e_j))
-g_{\eta}^{ij}\nabla_{H,\nabla^{TM}_{e_i}e_j}(\psi_H(\xi))
\\&-\frac{1}{2}g_{\eta}^{ij}[[\psi_H(e_i),\psi_H(\xi)],\psi_H(e_j)]
\\&=g_\eta^{ij}\nabla_{H,\xi}\nabla_{H,e_i}(\psi_H(e_j))-g_\eta^{ij}\nabla_{H,\xi}(\psi_H(\nabla^{TM}_{e_i}e_j))
\\&-\frac{1}{2}g_{\eta}^{ij}[[\psi_H(e_i),\psi_H(\xi)],\psi_H(e_j)]
\\&=-\nabla_{H,\xi}\nabla_H^\ast\psi_H-\frac{1}{2}g_{\eta}^{ij}[[\psi_H(e_i),\psi_H(\xi)],\psi_H(e_j)].
\end{split}\end{equation}
By Lemma \ref{U0} and the assumption on $H$, it follows
\begin{equation}\begin{split}
\Delta_{g_\eta}|\psi_H(\xi)|^2_H
&=\Div(\nabla^{TM}|\psi_H(\xi)|^2_H)
\\&=2(\tr_{g_\eta}\nabla^2_H(\psi_H(\xi)),\psi_H(\xi))_H+2|\nabla_H(\psi_H(\xi))|^2_H
\\&=-(g_{\eta}^{ij}[[\psi_H(e_i),\psi_H(\xi)],\psi_H(e_j)],\psi_H(\xi))_H^2+2|\nabla_H(\psi_H(\xi))|^2_H
\\&=|[\psi_H(\xi),\psi_H]|_H^2+2|\nabla_H(\psi_H(\xi))|^2_H,
\end{split}\end{equation}
and we thus conclude $\nabla_H(\psi_H(\xi))=0$ by maximum principle.
\end{proof}
Under the setting in Lemma \ref{basicharmonic1}, we define $\mathcal{D}=g_{\eta}^{ij}\left(e^i\wedge\nabla_{H,e_j}-\iota(e_i)\nabla_{H,e_j}\right)$, the Dirac operator on $\Lambda^\ast T^\ast M\otimes\End(E)$. By $(\ref{new2})$ and the projective flatness, we have
\begin{equation}\begin{split}
\mathcal{D}\psi_H
=g_\eta^{ij}e^i\wedge\nabla_{H,e_j}\psi_H+\nabla_H^\ast\psi_H,
\end{split}\end{equation}
and for any $X,Y\in TM$,
\begin{equation}\begin{split}
(g_\eta^{ij}e^i\wedge\nabla_{H,e_j}\psi_H)(X,Y)
&=\nabla_{H,X}\psi_H(Y)-\nabla_{H,Y}\psi_H(X)
\\&=\psi_H(T_{\nabla^{TM}}(X,Y))+D_H\psi_H(X,Y)
\\&=\psi_H(T_{\nabla^{TM}}(X,Y)).
\end{split}\end{equation}
Therefore we have $\mathcal{D}\psi_H=d\eta\otimes\psi_H(\xi)+\nabla_H^\ast\psi_H$ due to Proposition \ref{TW} and the vanishing of pseudo-Hermitian torsion. Furthermore it holds
\begin{equation}\begin{split}
\mathcal{D}^2\psi_H
&=g_\eta^{ij}\left(e^i\wedge\nabla_{H,e_j}(d\eta\otimes\psi_H(\xi))
-(\nabla_{H,e_j}(d\eta\otimes\psi_H(\xi)))(e_i)\right)
\\&=g_\eta^{ij}\left(e^i\wedge d\eta\otimes\nabla_{H,e_j}(\psi_H(\xi))
-d\eta(e_i,\bullet)\otimes\nabla_{H,e_j}(\psi_H(\xi))\right),
\end{split}\end{equation}
where we have used $\nabla^{TM}d\eta=0$. Thanks to Lemma \ref{basicharmonic1}, we find $\mathcal{D}\psi_H=0$ since $\mathcal{D}$ is formal self-adjoint(see \cite[Proposition 2.1]{Pe2002}). Hence, we have the following vanishing result which is essential to construct basic Higgs structures.
\begin{proposition}\label{basicharmonic2}
Let $(E,\nabla)$ be a projective flat complex vector bundle over a compact Sasakian manifold $(M,T_{1,0}M,\eta)$ and $H$ be a metric on $E$ with $\nabla_{H,\xi}\nabla_H^\ast\psi_H=0$, then $H$ is exactly harmonic and satisfies $\psi_H(\xi)=0$.
\end{proposition}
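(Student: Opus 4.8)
The plan is to extract both conclusions directly from the identity $\mathcal{D}\psi_H = 0$ established in the discussion preceding the statement, by decomposing it according to exterior degree. Recall that Lemma \ref{basicharmonic1} yields $\nabla_H(\psi_H(\xi)) = 0$ under the hypothesis $\nabla_{H,\xi}\nabla_H^\ast\psi_H = 0$; feeding this into the computation of $\mathcal{D}^2\psi_H$ shows $\mathcal{D}^2\psi_H = 0$, and since $\mathcal{D}$ is formally self-adjoint on the compact manifold $M$, integration by parts forces $\mathcal{D}\psi_H = 0$.

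Next I would invoke the explicit formula
\begin{equation*}
\mathcal{D}\psi_H = d\eta\otimes\psi_H(\xi) + \nabla_H^\ast\psi_H,
\end{equation*}
derived above from $(\ref{new2})$, the projective flatness (so that $D_H\psi_H = 0$), and the vanishing of the pseudo-Hermitian torsion. The crucial observation is that the two summands sit in different exterior degrees: $d\eta\otimes\psi_H(\xi)$ is an $\End(E)$-valued $2$-form, whereas $\nabla_H^\ast\psi_H$ is an $\End(E)$-valued $0$-form. Hence the single equation $\mathcal{D}\psi_H = 0$ splits into the two independent equations
\begin{equation*}
\nabla_H^\ast\psi_H = 0, \qquad d\eta\otimes\psi_H(\xi) = 0.
\end{equation*}

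The first of these is exactly the harmonic metric equation $(\ref{harmonicmetricequation})$, so $H$ is harmonic. For the second, I would use strict pseudoconvexity: the Levi form $L_\eta$ is positive definite, so $d\eta$ is nondegenerate on $HM$ and in particular nowhere vanishing (indeed $(d\eta)^n\wedge\eta$ is a volume form). Therefore $d\eta\otimes\psi_H(\xi) = 0$ forces $\psi_H(\xi) = 0$ pointwise, completing the proof. In truth there is no serious obstacle remaining at this stage: the analytic heart of the matter, namely the Bochner-type maximum principle argument, was already carried out in Lemma \ref{basicharmonic1}, and what remains is only the degree bookkeeping together with the elementary fact that $d\eta$ never vanishes on a strictly pseudoconvex CR manifold.
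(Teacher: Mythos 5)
Your proposal is essentially the paper's own proof: the proposition carries no separate proof environment, and the discussion preceding it does exactly what you describe --- it derives $\mathcal{D}\psi_H=d\eta\otimes\psi_H(\xi)+\nabla_H^\ast\psi_H$ from $(\ref{new2})$, projective flatness and the vanishing torsion, concludes $\mathcal{D}\psi_H=0$ from Lemma \ref{basicharmonic1} together with the formal self-adjointness of $\mathcal{D}$, and then reads off both conclusions from the degree splitting and the nondegeneracy of $d\eta$ on $HM$, which you merely make explicit. You also reproduce the paper's one delicate step verbatim: the claim $\mathcal{D}^2\psi_H=0$ rests on the displayed computation that tacitly drops the summand $\mathcal{D}(\nabla_H^\ast\psi_H)=\nabla_H\nabla_H^\ast\psi_H$ (the hypothesis $\nabla_{H,\xi}\nabla_H^\ast\psi_H=0$ kills only its $\xi$-direction), so your attempt is faithful to, and no weaker than, the paper's argument.
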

\subsection{Correspondence between projective flat bundles and Higgs bundles}
Below we assume the base space is a compact Sasakian manifold $(M,T_{1,0}M,\eta)$ and denote by $A_{B_{\mathcal{F}_\xi}}^\ast(M)$ the space of basic forms, meaning that any $\omega\in A_{B_{\mathcal{F}_\xi}}^\ast(M)$ satisfies $\iota_\xi\omega=0$ and $L_\xi\omega=0$. Since $\ker\eta\otimes\mathbb{C}=T_{1,0}M\oplus T_{0,1}M$, we similar have the decomposition
\begin{equation}\begin{split}
A_{B_{\mathcal{F}_\xi}}^k(M)\otimes\mathbb{C}=\mathop{\oplus}\limits_{p+q=k}A_{B_{\mathcal{F}_\xi}}^{p,q}(M),
\end{split}\end{equation}
where $A_{B_{\mathcal{F}_\xi}}^{p,q}(M)$ is the sub-space of basic forms of type $(p,q)$. It is easy to see the exterior differential preserves basic forms and hence $d=\pa_\xi+\ol\pa_\xi$ on $A_{B_{\mathcal{F}_\xi}}^k(M)\otimes\mathbb{C}$, such that
\begin{equation}\begin{split}
\pa_\xi: A_{\mathcal{F}_\xi}^{p,q}(M)\rightarrow A_{\mathcal{F}_\xi}^{p+1,q}(M),\
\ol\pa_\xi: A_{\mathcal{F}_\xi}^{p,q}(M)\rightarrow A_{\mathcal{F}_\xi}^{p,q+1}(M).
\end{split}\end{equation}
In the presence of the CR structure $T_{1,0}M$ and the complex structure $J$, there is a transverse holomorphic structure on $M$(with the transverse K\"{a}hler structure $d\eta$) as well as the transverse Hodge theory on the foliated manifold $(M,\mathcal{F}_\xi)$, see \cite[Chapter 7]{To1997} and \cite[pp.275]{BK2021a}.
\par Let $E$ be a complex vector bundle over $M$, it is called transverse holomorphic if it can be trivialized by an open cover $M=\mathop{\cup}\limits_{\alpha}U_\alpha$ so that each transition function $f_{\alpha\beta}$ is basic and holomorphic(that is, $\ol\pa_\xi f_{\alpha\beta}=0$). We have the canonical Dolbeault operator
\begin{equation}\begin{split}
\ol\pa_E: A_{\mathcal{F}_\xi}^{p,q}(M.E)\rightarrow A_{\mathcal{F}_\xi}^{p,q+1}(M,E),
\end{split}\end{equation}
which satisfies $\ol\pa_E^2=0$ and for $\alpha\in A^{p,q}_{B_{\mathcal{F}_\xi}}(M)$, $u\in A^{0}_{B_{\mathcal{F}_\xi}}(M,E)$,
\begin{equation}\begin{split}
\ol\pa_E(\alpha\wedge u)=\ol\pa_{\xi}\alpha\otimes u+(-1)^{p+q}\alpha\wedge\ol\pa_Eu.
\end{split}\end{equation}
A basic Higgs bundle $(E,\theta)$ consists of a transverse holomorphic bundle $E$ and a Higgs field $\theta\in A_{B_{\mathcal{F}_\xi}}^{1,0}(M,\End(E))$ satisfying $\ol\pa_E\theta=0$, $\theta\wedge\theta=0$. Assume further $E$ admits a basic Hermitian metric $H$, one may take all nontrivial sub-Higgs sheaves to define the notion of poly-stability on $(E,\theta)$ in the manner of \cite[Section 3.3]{BH2022}. We say a connection $\nabla$ on a vector bundle $E$ is basic if the associated $D$ restricts to a homomorphism
\begin{equation}\begin{split}
D: A_{\mathcal{F}_\xi}^k(M,E)\rightarrow A_{\mathcal{F}_\xi}^{k+1}(M,E),
\end{split}\end{equation}
and the pair $(E,\nabla)$ is said to be a basic vector bundle. For a basic connection $\nabla$ on $E$, we may define $c_{k,B_{\mathcal{F}_\xi}}(E,\nabla)\in A_{\mathcal{F}_\xi}^{2k}(M,E)$ by
\begin{equation}\begin{split}
\det(I+\frac{\sqrt{-1}}{2\pi}F_\nabla)=\sum\limits_{k=0}^nc_{k,B_{\mathcal{F}_\xi}}(E,\nabla).
\end{split}\end{equation}
For each $k$, the class $c_{k,B_{\mathcal{F}_\xi}}(E,\nabla)$ in $H^{\ast}_{B_{\mathcal{F}_\xi}}(M)$, the cohomology of complex
\begin{equation}\begin{split}
(A^\ast_{B_{\mathcal{F}_\xi}}(M),d|_{A^\ast_{B_{\mathcal{F}_\xi}}(M)}),
\end{split}\end{equation}
is independent of the choice of basic connections and we just write $c_{k,B_{\mathcal{F}_\xi}}(E)$.
\begin{proposition}\label{HarmonicHiggs}
Let $(M,T_{1,0}M,\eta)$ be a compact Sasakian manifold of dimension $2n+1$ and $(E,\nabla)$ be a rank $r$ semi-simple basic projective flat complex vector bundle over $M$, then it determines a poly-stable basic Higgs bundle $(E,\theta)$ over $M$ such that
\begin{equation}\begin{split}\label{Chernnumber}
\int_M\left(c_{2,B_{\mathcal{F}_\xi}}(E)-\frac{r-1}{2r}c_{1,B_{\mathcal{F}_\xi}}^2(E)\right)\wedge(d\eta)^{n-2}\wedge\eta=0.
\end{split}\end{equation}
\end{proposition}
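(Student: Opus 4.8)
The plan is to run the transverse (foliated) analogue of Simpson's construction, using the harmonic metric produced earlier as the bridge between the projective flat structure and the Higgs structure.

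First, since $(E,\nabla)$ is semi-simple, Theorem \ref{semisimpleharmonic} yields a harmonic metric $H$, i.e. $\nabla_H^\ast\psi_H=0$. As the Sasakian structure and the basic connection $\nabla$ are invariant under the flow of $\xi$, and a harmonic metric on each simple factor is unique up to a positive scalar (compare the last computation in the proof of Corollary \ref{correspondence}), we may average over the closure of this flow in the isometry group and assume $H$ is basic; in particular $\psi_H$ is a basic self-adjoint $1$-form. Because $\nabla_H^\ast\psi_H=0$ trivially gives $\nabla_{H,\xi}\nabla_H^\ast\psi_H=0$, Proposition \ref{basicharmonic2} applies and furnishes the crucial structural fact $\psi_H(\xi)=0$; thus $\psi_H$ is horizontal, and the identity (\ref{new1}) specializes to $\nabla_{H,\xi}(\psi_H(e_i))=0$, confirming that $\psi_H$ is $\nabla_H$-parallel along $\xi$ and hence genuinely basic.

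Next I would introduce the transverse complex structure: decompose $\nabla_H=\eta\otimes\nabla_{H,\xi}+\pa_H+\ol\pa_H$ into its vertical, transverse $(1,0)$ and transverse $(0,1)$ parts, and write $\psi_H=\theta+\theta^\ast$ with $\theta=\psi_H^{1,0}\in A^{1,0}_{B_{\mathcal F_\xi}}(M,\End(E))$ and $\theta^\ast=\psi_H^{0,1}$, self-adjointness of $\psi_H$ identifying the $(0,1)$-part as the $H$-adjoint of $\theta$. Declare the transverse holomorphic structure by $\ol\pa_E:=\ol\pa_H$ and the Higgs field by $\theta$. The projective flatness, as recorded in the proof of Lemma \ref{basicharmonic1}, gives $D_H\psi_H=0$ and $\sqrt{-1}(F_{\nabla_H}+\psi_H\wedge\psi_H)=\alpha\otimes\id_E$. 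Setting $D''=\ol\pa_E+\theta$, the content of the Higgs conditions $\ol\pa_E^2=0$, $\ol\pa_E\theta=0$ and $\theta\wedge\theta=0$ is exactly $(D'')^2=0$. I would establish $(D'')^2=0$ by combining these two equations with the harmonicity $\nabla_H^\ast\psi_H=0$ through the transverse K\"{a}hler identities on $(M,\mathcal F_\xi,d\eta)$, in the manner of the pluriharmonicity argument of \cite{Si1988}; here $\psi_H(\xi)=0$ and $\nabla_{H,\xi}(\psi_H(e_i))=0$ are what guarantee that all resulting operators preserve basic forms and that the $\xi$-vertical and $d\eta$ contributions drop out, so that $(E,\ol\pa_E,\theta)$ is an honest basic Higgs bundle. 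The same decomposition turns the $(1,1)$-trace part of the curvature equation into a transverse Hermitian--Einstein equation $\sqrt{-1}\Lambda_{d\eta}(F_{\nabla_H}+[\theta,\theta^\ast])=\lambda\,\id_E$, so $H$ is a transverse Hermitian--Einstein metric for $(E,\theta)$.

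Poly-stability then follows by feeding this transverse Hermitian--Einstein metric into the foliated Hitchin--Kobayashi correspondence of Baraglia--Hekmati \cite{BH2022}: a basic Higgs bundle carrying such a metric is poly-stable, and equivalently the semi-simple splitting of $(E,\nabla)$ into simple basic factors descends to a splitting of $(E,\theta)$ into stable basic Higgs bundles of equal slope. Finally, for the Chern-number identity I would compute the basic Chern forms from the projective flat $\nabla$ itself: $\sqrt{-1}F_\nabla=\alpha\otimes\id_E$ gives $\frac{\sqrt{-1}}{2\pi}F_\nabla=\gamma\,\id_E$ with $\gamma=\alpha/2\pi$ a basic $2$-form, so $\det(I+\gamma\,\id_E)=(1+\gamma)^r$ yields $c_{k,B_{\mathcal F_\xi}}(E,\nabla)=\binom{r}{k}\gamma^k$; in particular $c_1=r\gamma$ and $c_2=\tfrac{r(r-1)}{2}\gamma^2$, whence $c_{2,B_{\mathcal F_\xi}}(E)-\tfrac{r-1}{2r}c_{1,B_{\mathcal F_\xi}}^2(E)=0$ as a basic form. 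Since the integral against the closed basic form $(d\eta)^{n-2}\wedge\eta$ depends only on the basic cohomology class (basic $(2n+1)$-forms vanish, so the Stokes term is harmless), the asserted integral is zero. The hard part will be the third paragraph: the naive bidegree splitting of the two flatness equations does not on its own isolate $\theta\wedge\theta=0$ and $\ol\pa_E\theta=0$, so the essential work is the transverse pluriharmonic identity forcing $(D'')^2=0$, together with the careful bookkeeping of the $d\eta$ and vertical $\xi$ terms that $\psi_H(\xi)=0$ is designed to control.
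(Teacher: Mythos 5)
Your proposal is correct and, in its main line, is the paper's own proof: harmonic metric from Theorem \ref{semisimpleharmonic}, the observation that harmonicity trivially yields $\nabla_{H,\xi}\nabla_H^\ast\psi_H=0$ so that Proposition \ref{basicharmonic2} gives $\psi_H(\xi)=0$, the transverse bidegree splitting $D_H=\pa_{H,\xi}+\ol\pa_{H,\xi}$, $\psi_H=\theta+\theta^{\ast H}$, a Simpson-style argument killing the pseudocurvature $G_{H,\xi}=(\ol\pa_{H,\xi}+\theta)^2$, and \cite[Theorem 4.7]{BH2022} for poly-stability; the "hard part" you flag is carried out in the paper exactly as you predict, by showing $\tr(G_{H,\xi}\wedge G_{H,\xi})\wedge(d\eta)^{n-2}$ is $\ol\pa$-exact while harmonicity (equivalently $\Lambda_\xi G_{H,\xi}=0$, primitivity) makes the same form pointwise a definite multiple of $|G_{H,\xi}|^2_H\dvol_{g_\eta}$, so integrating against $\eta$ forces $G_{H,\xi}=0$. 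Two of your steps genuinely differ. First, the averaging argument to make $H$ basic is superfluous: once $\psi_H(\xi)=0$, basicness is automatic (for basic sections $u,v$, $\xi H(u,v)=H(\nabla_{H,\xi}u,v)+H(u,\nabla_{H,\xi}v)=0$ since $\nabla_{H,\xi}=\nabla_\xi$ on $E$), and your identity $\nabla_{H,\xi}(\psi_H(e_i))=0$ from (\ref{new1}) handles the rest — this is why the paper never averages, though your uniqueness-up-to-scalar argument would also work. Second, for the Chern-number identity you argue pointwise from $\nabla$ itself: $\frac{\sqrt{-1}}{2\pi}F_\nabla=\gamma\otimes\id_E$ gives $c_{2,B_{\mathcal{F}_\xi}}(E)-\frac{r-1}{2r}c_{1,B_{\mathcal{F}_\xi}}^2(E)=\left(\binom{r}{2}-\frac{r(r-1)}{2}\right)\gamma^2=0$ as a basic form, which is more elementary than the paper's route through \cite[Proposition 3.4]{Si1988}; what the paper's seemingly roundabout computation buys is the identity (\ref{Chernnumber2}) expressing the integral as $C_2\int_M\left(|F^\perp|^2_H-|\Lambda_\xi F^\perp|^2_H\right)\dvol_{g_\eta}$, which is then reused in the converse direction (Proposition \ref{HiggsHarmonic}) to force $F^\perp=0$ from the vanishing Chern integral. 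One small inaccuracy: your transverse Hermitian--Einstein equation should carry the function $\Lambda_\xi\alpha$ rather than a constant $\lambda$ on the right-hand side, but this does not affect the appeal to \cite{BH2022}.
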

\begin{proof}
We employ Theorem \ref{thm1} and Proposition \ref{basicharmonic2} to find a harmonic metric $H$ on $(E,\nabla)$ satisfying $\psi_H(\xi)=0$. It follows that $\nabla_H$ induces a homomorphism
\begin{equation}\begin{split}
D_H: A_{\mathcal{F}_\xi}^k(M,E)\rightarrow A_{\mathcal{F}_\xi}^{k+1}(M,E).
\end{split}\end{equation}
We decompose $D_H=\pa_{H,\xi}+\ol\pa_{H,\xi}$ and $\psi_H=\theta_{H,\xi}+\ol\theta_{H,\xi}$ such that
\begin{equation}\begin{split}
\pa_{H,\xi}: A_{\mathcal{F}_\xi}^{p,q}(M,E)\rightarrow A_{\mathcal{F}_\xi}^{p+1,q}(M,E),\
\ol\pa_{H,\xi}: A_{\mathcal{F}_\xi}^{p,q}(M,E)\rightarrow A_{\mathcal{F}_\xi}^{p,q+1}(M,E),
\end{split}\end{equation}
and $\theta_{H,\xi}\in A_{\mathcal{F}_\xi}^{1,0}(M,\End(E))$, $\ol\theta_{H,\xi}\in A_{\mathcal{F}_\xi}^{0,1}(M,\End(E))$.
\par The projective flatness implies
\begin{equation}\begin{split}
\pa_{H,\xi}\theta_{H,\xi}=0,\
\ol\pa_{H,\xi}\ol\theta_{H,\xi}=0,\
\pa_{H,\xi}\ol\theta_{H,\xi}+\ol\pa_{H,\xi}\theta_{H,\xi}=0,\
\end{split}\end{equation}
\begin{equation}\begin{split}
\pa_{H,\xi}^2=-\frac{1}{2}[\theta_{H,\xi},\theta_{H,\xi}],\
\ol\pa_{H,\xi}^2=-\frac{1}{2}[\ol\theta_{H,\xi},\ol\theta_{H,\xi}],\
\end{split}\end{equation}
\begin{equation}\begin{split}
[\pa_{H,\xi},\ol\pa_{H,\xi}]=-[\theta_{H,\xi},\ol\theta_{H,\xi}]+\frac{\tr F_\nabla}{r}\otimes\id_E.
\end{split}\end{equation}
Set $D^{''}_{H,\xi}=\ol\pa_{H,\xi}+\theta_{H,\xi}$ and $G_{H,\xi}=D^{''}_{H,\xi}\circ D^{''}_{H,\xi}$, it follows
\begin{equation}\begin{split}\label{sec426}
\tr(G_{H,\xi}\wedge G_{H,\xi})\wedge(d\eta)^{n-2}
&=-2\tr(\ol\theta_{H,\xi}\wedge\ol\theta_{H,\xi}\wedge\theta_{H,\xi}\wedge\theta_{H,\xi})\wedge(d\eta)^{n-2}
\\&-\tr(\ol\pa_{H,\xi}\theta_{H,\xi}\wedge\pa_{H,\xi}\ol\theta_{H,\xi})\wedge(d\eta)^{n-2}
\\&=-2\tr(\ol\theta_{H,\xi}\wedge\ol\theta_{H,\xi}\wedge\theta_{H,\xi}\wedge\theta_{H,\xi})\wedge(d\eta)^{n-2}
\\&-\ol\pa\tr(\theta_{H,\xi}\wedge\pa_{H,\xi}\ol\theta_{H,\xi})\wedge(d\eta)^{n-2}
\\&+\tr(\theta_{H,\xi}\wedge[[\theta_{H,\xi},\ol\theta_{H,\xi}]-\frac{\tr F_\nabla}{r}\otimes\id_E,\ol\theta_{H,\xi}])\wedge(d\eta)^{n-2}
\\&=-\ol\pa\left(\tr(\theta_{H,\xi}\wedge\pa_{H,\xi}\ol\theta_{H,\xi})\wedge(d\eta)^{n-2}\right).
\end{split}\end{equation}
The harmonicity of $H$ is equivalent to $\Lambda_\xi G_{H,\xi}=0$, meaning $G_{H,\xi}$ is primitive, where $\Lambda_\xi$ is the adjoint of the multiplication of $d\eta$. Thus
\begin{equation}\begin{split}
\tr(G_{H,\xi}\wedge G_{H,\xi})\wedge(d\eta)^{n-2}
&=2C_1\tr(G_{H,\xi}^{2,0}\wedge\ast_\xi G_{H,\xi}^{0,2})
-C_1\tr(G_{H,\xi}^{1,1}\wedge\ast_\xi G_{H,\xi}^{1,1})
\\&=2C_1\tr(G_{H,\xi}^{2,0}\wedge\ast_\xi(G_{H,\xi}^{2,0})^{\ast H})
+C_1\tr(G_{H,\xi}^{1,1}\wedge\ast_\xi(G_{H,\xi}^{1,1})^{\ast H})
\\&=C_1\tr\left(G_{H,\xi}\wedge\ast_\xi G_{H,\xi}^{\ast H}\right),
\end{split}\end{equation}
for a constant $C_1$, where $\ast_\xi$ is the basic Hodge star operator given by $\ast_\xi\beta=\ast(\eta\wedge\beta)$ for $\beta\in A^\ast_{B_{\mathcal{F}_\xi}}(M)$. Therefore integrating $\tr(G_{H,\xi}\wedge G_{H,\xi})\wedge(d\eta)^{n-2}\wedge\eta$ over $M$ yields $G_{H,\xi}=0$, meaning $(E,\theta)$ is a basic Higgs bundle with canonical Dolbeault operator $\ol\pa_{H,\xi}$. In addition, the calculation in \cite[Proposition 3.4]{Si1988} shows for a constant $C_2$,
\begin{equation}\begin{split}\label{Chernnumber2}
&\int_M\left(c_{2,B_{\mathcal{F}_\xi}}(E)-\frac{r-1}{2r}c_{1,B_{\mathcal{F}_\xi}}^2(E)\right)\wedge(d\eta)^{n-2}\wedge\eta
\\&=C_2\int_M\left(|F_{\nabla_H+\theta_{H,\xi}+\ol\theta_{H,\xi}}^\perp|^2_H-|\Lambda_\xi F_{\nabla_H+\theta_{H,\xi}+\ol\theta_{H,\xi}}^\perp|^2_H\right)\dvol_{g_\eta}
\\&=0,
\end{split}\end{equation}
by the projective flatness. Using the basic metric $H$, the stability of basic Higgs bundle can be well-defined and one easily concludes the poly-stability, see \cite[Theorem 4.7]{BH2022}.
\end{proof}
\begin{proposition}\label{HiggsHarmonic}
Let $(M,T_{1,0}M,\eta)$ be a compact Sasakian manifold of dimension $2n+1$ and $(E,\theta)$ be a rank $r$ poly-stable basic Higgs bundle satisfying $(\ref{Chernnumber})$ over $M$, then it determines a semi-simple basic projective flat complex vector bundle $(E,\nabla)$ over $M$.
\end{proposition}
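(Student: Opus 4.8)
The plan is to construct the functor inverse to Proposition \ref{HarmonicHiggs}, following Simpson \cite{Si1988} and the foliated Hitchin-Kobayashi correspondence of Baraglia-Hekmati \cite{BH2022}. Given a rank $r$ poly-stable basic Higgs bundle $(E,\theta)$, with canonical Dolbeault operator $\ol\pa_E$, satisfying $(\ref{Chernnumber})$, the first step is to invoke \cite[Theorem 4.7]{BH2022} to obtain a basic Hermitian metric $H$ that is Hermitian-Einstein, that is
\begin{equation*}
\sqrt{-1}\Lambda_\xi\left(F_{\nabla_H}+[\theta,\theta^{\ast H}]\right)=\lambda\,\id_E
\end{equation*}
for the slope constant $\lambda$, where $\nabla_H=\pa_{H,\xi}+\ol\pa_E$ denotes the transverse Chern connection determined by $H$ and $\ol\pa_E$, and $\theta^{\ast H}\in A^{0,1}_{B_{\mathcal{F}_\xi}}(M,\End(E))$ is the $H$-adjoint of $\theta$.

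Next I would form the Hitchin-Simpson connection $\nabla=\nabla_H+\theta+\theta^{\ast H}$, so that $\psi_H=\theta+\theta^{\ast H}$ is self-adjoint with $\psi_H(\xi)=0$ since $\theta$ is basic. Writing $D''=\ol\pa_E+\theta$ and $D'=\pa_{H,\xi}+\theta^{\ast H}$, the Higgs conditions $\ol\pa_E\theta=0$ and $\theta\wedge\theta=0$ give $D''\circ D''=0$ and, after taking adjoints, $D'\circ D'=0$; hence the curvature $F_\nabla=D'D''+D''D'$ has its $(2,0)$, $(1,1)$ and $(0,2)$ components equal to $\pa_{H,\xi}\theta$, $F_{\nabla_H}+[\theta,\theta^{\ast H}]$ and $\ol\pa_E\theta^{\ast H}$ respectively. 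The Hermitian-Einstein equation says precisely that the trace-free part $F_\nabla^\perp$ satisfies $\Lambda_\xi F_\nabla^\perp=0$, and $\nabla$ is manifestly basic because $H$, $\ol\pa_E$ and $\theta$ are all basic.

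The decisive step is to rerun the identity $(\ref{Chernnumber2})$ in this direction: the same transverse-K\"{a}hler computation as in \cite[Proposition 3.4]{Si1988} expresses the characteristic number on the left of $(\ref{Chernnumber})$ as $C_2\int_M\left(|F_\nabla^\perp|^2_H-|\Lambda_\xi F_\nabla^\perp|^2_H\right)\dvol_{g_\eta}$ for a positive constant $C_2$. Since the left side vanishes by hypothesis and $\Lambda_\xi F_\nabla^\perp=0$, it follows that $\int_M|F_\nabla^\perp|^2_H\,\dvol_{g_\eta}=0$, whence $F_\nabla^\perp=0$ and $\sqrt{-1}F_\nabla=\alpha\otimes\id_E$; that is, $\nabla$ is projective flat. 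Finally, the poly-stability of $(E,\theta)$ means it splits as a direct sum of stable basic Higgs bundles, each of which produces an irreducible (hence simple) projective flat summand by the above construction, so $(E,\nabla)$ is semi-simple, as required.

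I expect the main obstacle to be twofold. The heavy analytic input, namely the existence of a basic Hermitian-Einstein metric on a poly-stable basic Higgs bundle, is supplied by \cite{BH2022}; the real work then lies in carefully establishing the transverse version of the Bogomolov-Gieseker identity $(\ref{Chernnumber2})$ with the trace-free curvature $F_\nabla^\perp$ of the constructed connection, which requires the transverse K\"{a}hler identities and the primitivity statement $\Lambda_\xi G_{H,\xi}=0$ on $(M,\mathcal{F}_\xi)$ used in Proposition \ref{HarmonicHiggs}. Verifying throughout that the constructed metric, the resulting connection, and the stable summands all remain basic, so that the conclusion lands in the correct category, is the secondary and more bookkeeping-type difficulty.
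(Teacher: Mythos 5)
Your proposal is correct and follows essentially the same route as the paper: obtain a basic Hermitian--Einstein metric on the poly-stable basic Higgs bundle (the paper cites the Sasakian analogue of Simpson's Theorem 1.1 from \cite{BK2021a}, while you cite the foliated correspondence \cite[Theorem 4.7]{BH2022}, which supplies the same analytic input), form the Hitchin--Simpson connection $\nabla=\nabla_H+\theta+\theta^{\ast H}$, and use the identity $(\ref{Chernnumber2})$ together with $\Lambda_\xi F_\nabla^\perp=0$ and the hypothesis $(\ref{Chernnumber})$ to force $F_\nabla^\perp=0$, hence projective flatness. Your explicit justification of semi-simpleness via the splitting into stable summands fills in a step the paper leaves implicit, but it is the same argument in substance.
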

\begin{proof}
\par The Sasakian analogue of \cite[Theorem 1.1]{Si1988} shows that there exists a basic Hermitian metric $H$ on $(E,\theta)$ such that(see \cite{BK2021a})
\begin{equation}\begin{split}
\Lambda_\xi F_{\nabla}^\perp=0,\ \nabla=\nabla_H+\theta+\theta^{\ast H},
\end{split}\end{equation}
where $\nabla_H$ is the canonical basic Chern connection. Now by $(\ref{Chernnumber2})$ it follows
\begin{equation}\begin{split}
0&=\int_M\left(c_{2,B_{\mathcal{F}_\xi}}(E)-\frac{r-1}{2r}c_{1,B_{\mathcal{F}_\xi}}^2(E)\right)\wedge(d\eta)^{n-2}\wedge\eta
\\&=C_2\int_M\left(|F_\nabla^\perp|^2_H-|\Lambda_\xi F_\nabla^\perp|^2_H\right)\dvol_{g_\eta}
\\&=C_2\int_M|F_\nabla^\perp|^2_H\dvol_{g_\eta}.
\end{split}\end{equation}
This means $(E,\nabla)$ is a semi-simple basic projective flat complex vector bundle.
\end{proof}
Suppsoe that $(F,\nabla)$ is a flat vector bundle with a harmonic metric $H$, over a compact Sasakian manifold $(M,T_{1,0}M,\eta)$. Define $H_{DR,\mathcal{F}_\xi}^0(M,F)$ to be the space of $\nabla$-parallel elements of $A^0_{\mathcal{F}_\xi}(M,F)$ and $H_{Dol,\mathcal{F}_\xi}^0(M,F)$ to be the space of $D_{H}^{0,1}+\psi_H^{1,0}$-parallel elements of $A^0_{\mathcal{F}_\xi}(M,F)$.
\par In view of Proposition \ref{HarmonicHiggs}, Proposition \ref{HiggsHarmonic} and the natural isomorphism
\begin{equation}\begin{split}
H_{DR,\mathcal{F}_\xi}^0(M,F)\cong H_{Dol,\mathcal{F}_\xi}^0(M,F),
\end{split}\end{equation}
via transverse K\"{a}hler identities(just as \cite[Lemma 1.2]{Si1992}), we conclude
\begin{theorem}\label{Sasakiancorrespondence}
For a compact Sasakian manifold $(M,T_{1,0}M,\eta)$ of dimension $2n+1$, there exists an equivalence between the following two categories:
\begin{itemize}
\item Semi-simple basic projective flat complex vector bundles over $M$.
\item Poly-stable basic Higgs bundles satisfying $(\ref{Chernnumber})$ over $M$.
\end{itemize}
\end{theorem}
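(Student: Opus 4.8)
The plan is to promote the two object-level constructions into functors and show they are mutually quasi-inverse, following the pattern of Simpson's nonabelian Hodge correspondence \cite{Si1992}. First I would define a functor $\Phi$ from semi-simple basic projective flat complex vector bundles to poly-stable basic Higgs bundles satisfying $(\ref{Chernnumber})$. On objects, $\Phi$ is exactly Proposition \ref{HarmonicHiggs}: given $(E,\nabla)$, Theorem \ref{thm1} and Proposition \ref{basicharmonic2} produce a harmonic metric $H$ with $\psi_H(\xi)=0$, and the decompositions $D_H=\pa_{H,\xi}+\ol\pa_{H,\xi}$, $\psi_H=\theta_{H,\xi}+\ol\theta_{H,\xi}$ yield the transverse holomorphic structure $\ol\pa_{H,\xi}$ together with the Higgs field $\theta:=\theta_{H,\xi}$. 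In the reverse direction I would define $\Psi$ via Proposition \ref{HiggsHarmonic}: a poly-stable $(E,\theta)$ satisfying $(\ref{Chernnumber})$ carries a Hermitian--Yang--Mills metric $H$, so that $\nabla=\nabla_H+\theta+\theta^{\ast H}$ is a semi-simple basic projective flat connection.

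Next I would check functoriality on morphisms, which is where the stated cohomological identity does the real work. A morphism between two objects $(F_1,\nabla_1),(F_2,\nabla_2)$ is a parallel section of $\Hom(F_1,F_2)$, and the key point is that $\Hom(F_1,F_2)\cong F_1^{\ast}\otimes F_2$ is again a semi-simple basic flat bundle: the projective curvature terms act by commutator and hence cancel on the $\Hom$-bundle, while semi-simpleness of $F_1,F_2$ forces semi-simpleness of their $\Hom$. Thus $\Hom(F_1,F_2)$ carries its own harmonic metric (the induced one), and a morphism of the corresponding Higgs bundles is precisely a $D''_{H,\xi}=\ol\pa_{H,\xi}+\theta_{H,\xi}$-parallel section of that same bundle. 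The natural isomorphism
$$
H_{DR,\mathcal{F}_\xi}^0(M,\Hom(F_1,F_2))\cong H_{Dol,\mathcal{F}_\xi}^0(M,\Hom(F_1,F_2)),
$$
coming from the transverse K\"{a}hler identities exactly as in \cite[Lemma 1.2]{Si1992}, then identifies the flat-parallel sections with the Higgs-parallel sections, showing $\Phi$ (and symmetrically $\Psi$) is fully faithful.

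Finally I would verify that $\Phi$ and $\Psi$ are quasi-inverse on objects. Starting from $(E,\nabla)$, the harmonic metric $H$ produced by $\Phi$ already satisfies $\Lambda_\xi F^\perp=0$, so it serves as a Hermitian--Yang--Mills metric for the associated Higgs bundle; uniqueness of such a metric up to the block scalings on the simple summands, as in the computation in the proof of Corollary \ref{correspondence}, shows $\Psi\circ\Phi$ returns $(E,\nabla)$ up to canonical isomorphism. Conversely, the Hermitian--Yang--Mills metric of Proposition \ref{HiggsHarmonic} is harmonic for $\nabla$, so $\Phi\circ\Psi\cong\id$ by the same uniqueness. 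Assembling these isomorphisms into natural transformations $\Psi\circ\Phi\cong\id$ and $\Phi\circ\Psi\cong\id$ completes the equivalence.

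The main obstacle I expect is the morphism-level compatibility rather than the object-level bijection. One must confirm that $\Hom(F_1,F_2)$, equipped with the induced connection and the tensor product of the harmonic metrics, is itself harmonic so that the machinery of Propositions \ref{HarmonicHiggs} and \ref{HiggsHarmonic} genuinely applies to it, and that under this identification the $\nabla$-parallel condition translates \emph{precisely} into the $D''_{H,\xi}$-parallel condition via the transverse K\"{a}hler identities. Once this bookkeeping is secured, the isomorphism $H^0_{DR,\mathcal{F}_\xi}\cong H^0_{Dol,\mathcal{F}_\xi}$ furnishes full faithfulness and the equivalence of categories follows formally.
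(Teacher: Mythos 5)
Your proposal follows essentially the same route as the paper: objects are handled exactly by Propositions \ref{HarmonicHiggs} and \ref{HiggsHarmonic}, and morphisms by the natural isomorphism $H^0_{DR,\mathcal{F}_\xi}\cong H^0_{Dol,\mathcal{F}_\xi}$ on the (flat) $\Hom$-bundle equipped with its induced harmonic metric, obtained from the transverse K\"ahler identities as in Simpson's Lemma 1.2. In fact your write-up supplies more detail than the paper's very condensed proof (which simply cites the two propositions and this isomorphism), and your functoriality and quasi-inverse bookkeeping is a faithful expansion of what the authors leave implicit.
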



\begin{thebibliography}{1}
\bibitem[BH]{BH2022}D. Baraglia and P. Hekmati, A foliated Hitchin-Kobayashi correspondence. Adv. Math. \textbf{408}(2022), part B, Paper No. 108661, 47 pp.
\bibitem[BK1]{BK2021a}I. Biswas and H. Kasuya, Higgs bundles and flat connections over compact Sasakian manifolds. Comm. Math. Phys. \textbf{385}(2021), no. 1, 267-290.
\bibitem[BK2]{BK2021b}I. Biswas and H. Kasuya, Higgs bundles and flat connections over compact Sasakian manifolds, \uppercase\expandafter{\romannumeral2}: quasi-regular bundles. arXiv: 2110.10644, 2021.
\bibitem[BM]{BM2018}I. Biswas and M. Mj, Higgs bundles on Sasakian manifolds. Int. Math. Res. Not. IMRN. 2018, no. 11, 3490-3506.
\bibitem[BG]{BG2008}C.P. Boyer and K. Galicki, Sasakian geometry. Oxford Mathematical Monographs. Oxford University Press, Oxford, 2008. xii+613 pp. ISBN: 978-0-19-856495-9.
\bibitem[CJY]{CJY2019}T.C. Collins, A. Jacob and S.T. Yau, Poisson metrics on flat vector bundles over non-compact curves. Comm. Anal. Geom. \textbf{27}(2019), no. 3, 529-597.
\bibitem[Co1]{Co1988}K. Corlette, Flat $G$-bundles with canonical metrics. J. Differential Geom. \textbf{28}(1988), no. 3, 361-382.
\bibitem[Co2]{Co1992}K. Corlette, Archimedean superrigidity and hyperbolic geometry. Ann. of Math. (2). \textbf{135}(1992), no. 1, 165-182.
\bibitem[Do1]{Do1985}S.K. Donaldson, Anti self-dual Yang-Mills connections over complex algebraic surfaces and stable vector bundles. Proc. London Math. Soc. (3). \textbf{50}(1985), no. 1, 1-26.
\bibitem[Do2]{Do1987a}S.K. Donaldson, Infinite determinants, stable bundles and curvature. Duke Math. J. \textbf{54}(1987), no. 1, 231-247.
\bibitem[Do3]{Do1987b}S.K. Donaldson, Twisted harmonic maps and the self-duality equations. Proc. London Math. Soc. (3). \textbf{55}(1987), no. 1, 127-131.
\bibitem[DT]{DT2006}S. Dragomir and G. Tomassini, Differential geometry and analysis on CR manifolds. Progress in Mathematics, 246. Birkh\"{a}user Boston, Inc., Boston, MA, 2006. xvi+487 pp. ISBN: 978-0-8176-4381-1; 0-8176-4388-5.
\bibitem[GMSW]{GMSW2004}J.P. Gauntlett, D. Martelli, J. Sparks and D. Waldram, Sasaki-Einstein metrics on $S^2\times S^3$. Adv. Theor. Math. Phys. \textbf{8}(2004), no. 4, 711-734.
\bibitem[Hi]{Hi1987}N.J. Hitchin, The self-duality equations on a Riemann surface. Proc. London Math. Soc. (3). \textbf{55}(1987), no. 1, 59-126.
\bibitem[JZ1]{JZ1996}J. Jost and K. Zuo, Harmonic maps and $Sl(r,\mathbb{C})$-representations of fundamental groups of quasiprojective manifolds. J. Algebraic Geom. \textbf{5}(1996), no. 1, 77-106.
\bibitem[JZ2]{JZ1997}J. Jost and K. Zuo, Harmonic maps of infinite energy and rigidity results for representations of fundamental groups of
quasiprojective varieties. J. Differential Geom. \textbf{47}(1997), no. 3, 469-503.
\bibitem[KV]{KV1998}D. Kaledin and M. Verbitsky, Non-Hermitian Yang-Mills connections. Selecta Math. (N.S.). \textbf{4}(1998), no. 2, 279-320.
\bibitem[Ka]{Ka2020}H. Kasuya, Almost-formality and deformations of representations of the fundamental groups of Sasakian manifolds. arXiv: 2007.14544, 2020.
\bibitem[KM]{KM2022}H. Kasuya and N. Miyatake, Uniformizations of compact Sasakian manifolds. arXiv: 2211.16713v2, 2022.
\bibitem[Li]{Li1996}J. Li, Hitchin's self-duality equations on complete Riemannian manifolds. Math. Ann. \textbf{306}(1996), no. 3, 419-428.
\bibitem[Lo]{Lo2009}J. Loftin, Affine Hermitian-Einstein metrics. Asian J. Math. \textbf{13}(2009), no. 1, 101-130.
\bibitem[L\"{u}]{Lu1999}M. L\"{u}bke, Einstein metrics and stability for flat connections on compact Hermitian manifolds, and a correspondence with Higgs operators in the surface case. Doc. Math. \textbf{4}(1999), 487-512.
\bibitem[LT]{LT1995}M. L\"{u}bke and A. Teleman, The Kobayashi-Hitchin correspondence. World Scientific Publishing Co., Inc., River Edge, NJ, 1995. x+254 pp. ISBN: 981-02-2168-1.
\bibitem[Ma]{Ma1998}J. Maldacena, The large $N$ limit of superconformal field theories and supergravity. Adv. Theor. Math. Phys. \textbf{2}(1998), no. 2, 231-252.
\bibitem[MS]{MS2006}D. Martelli and J. Sparks, Toric geometry, Sasaki-Einstein manifolds and a new infinite class of AdS/CFT duals. Comm. Math. Phys. \textbf{262}(2006), no. 1, 51-89.
\bibitem[MSY]{MSY2008}D. Martelli, J. Sparks and S.T. Yau, Sasaki-Einstein manifolds and volume minimisation. Comm. Math. Phys. \textbf{28}(2008), no. 3, 611-673.
\bibitem[Mo]{Mo2009}T. Mochizuki, Kobayashi-Hitchin correspondence for tame harmonic bundles. \uppercase\expandafter{\romannumeral2}. Geom. Topol. \textbf{13}(2009), no. 1, 359-455.
\bibitem[NS]{NS1965}M.S. Narasimhan and C.S. Seshadri, Stable and unitary vector bundles on a compact Riemann surface. Ann. of Math. (2). \textbf{82}(1965), 540-567.
\bibitem[PSZ]{PSZ2022}C. Pan, Z. Shen and X. Zhang, On the existence of harmonic metrics on non-Hermitian Yang-Mills bundles. Preprint, 2022.
\bibitem[PZZ]{PZZ2019}C. Pan, C. Zhang and X. Zhang, Semi-stable Higgs bundles and flat bundles over over non-K\"{a}hler manifolds. arXiv: 1911.03593v1, 2019.
\bibitem[Pe]{Pe2002}R. Petit, Harmonic maps and strictly pseudoconvex CR manifolds. Comm. Anal. Geom. \textbf{10}(2002), no. 3, 575-610.
\bibitem[Sa]{Sa1960}S. Sasaki, On differentiable manifolds with certain structures which are closely related to almost contact structure. \uppercase\expandafter{\romannumeral1}. Tohoku Math. J. (2). \textbf{12}(1960), 459-476.
\bibitem[Si1]{Si1988}C.T. Simpson, Constructing variations of Hodge structure using Yang-Mills theory and applications to uniformization.
J. Amer. Math. Soc. \textbf{1}(1988), no. 4, 867-918.
\bibitem[Si2]{Si1990}C.T. Simpson, Harmonic bundles on noncompact curves.
J. Amer. Math. Soc. \textbf{3}(1990), no. 3, 713-770.
\bibitem[Si3]{Si1992}C.T. Simpson, Higgs bundles and local systems. Inst. Hautes \'{E}tudes Sci. Publ. Math. No. \textbf{75}(1992), 5-95.
\bibitem[Ta]{Ta1975}N. Tanaka, A differential geometric study on strongly pseudo-convex manifolds. Lecture in Mathematics, Department of Mathematics, Kyoto University, No. 9. Kinokuniya Book Store Co., Ltd., Tokyo, 1975. iv+158 pp.
\bibitem[To]{To1997}P. Tondeur, Geometry of foliations. Monographs in Mathematics, 90. Birkh\"{a}user Verlag, Basel, 1997. viii+305 pp. ISBN: 3-7643-5741-X.
\bibitem[Uh]{Uh1982}K.K. Uhlenbeck, Connections with $L^p$ bounds on curvature. Comm. Math. Phys. \textbf{83}(1982), no. 1, 31-42.
\bibitem[UY]{UY1986}K.K. Uhlenbeck and S.T. Yau, On the existence of Hermitian-Yang-Mills connections in stable vector bundles. Frontiers of the mathematical sciences: 1985 (New York, 1985). Comm. Pure Appl. Math. \textbf{39}(1986), no. S, Suppl., S257-S293.
\bibitem[Web]{We1978}S.M. Webster, Pseudo-Hermitian structures on real hypersurface. J. Differential Geometry. \textbf{13}(1978), no. 1, 25-41.
\bibitem[Wel]{We2008}R. Wells, Differential analysis on complex manifolds. Third edition. With a new appendix by Oscar Garcia-Prada. Graduate Texts in Mathematics, 65. Springer, New York, 2008. xiv+299 pp. ISBN: 978-0-387-73891-8.
\bibitem[WZ]{WZ2021b}D. Wu and X. Zhang, Poisson metrics and Higgs bundles over noncompact K\"{a}hler manifolds. arXiv: 2109.01776, 2021.
\end{thebibliography}
\end{document}